\documentclass[12pt]{article}
\usepackage{amsmath,amssymb,amsfonts}
\usepackage{epsfig}
\usepackage{tikz}
\usepackage{graphics}
\usepackage{bbm}
\usepackage{bm, bbm}
\usepackage{todonotes}
\usepackage[thmmarks]{ntheorem}
\usepackage[]{hyperref}
 \hypersetup{colorlinks=true, citecolor=black,%
                 filecolor=black,%
                 linkcolor=black,%
                 urlcolor=black,  pdftex}

\usetikzlibrary[graphs, arrows, backgrounds, intersections, positioning, fit, petri, calc, shapes, decorations.pathmorphing]
\usepgflibrary{patterns}

\definecolor{darkred}{RGB}{105,0,0}

\makeatletter
\newtheoremstyle{prime}%
 {\item[\hskip\labelsep \theorem@headerfont ##1\ \theorem@separator]}%
{\item[\hskip\labelsep \theorem@headerfont ##1\ ##3' \theorem@separator]}
\makeatother

\makeatletter
\newtheoremstyle{proofof}
{\item[\hskip\labelsep \theorem@headerfont ##1\ \theorem@separator]}%
{\item[\hskip\labelsep \theorem@headerfont ##1\ ##3\theorem@separator]}
\makeatother

\newtheorem{theorem}{Theorem}
\newtheorem{lemma}[theorem]{Lemma}

\newtheorem{proposition}[theorem]{Proposition}

\newtheorem{corollary}[theorem]{Corollary}

\theoremstyle{prime}



%







\def \QD1 {\hfill $\spadesuit$}

\newcommand{\DF}[1]{{\bf #1\/}}

\newcommand{\set}[2]{\{#1 \;|\; #2 \}}
\newcommand{\ems}{\varnothing}
\newcommand{\sm}{\setminus}

\newcommand{\De}{\Delta}
\newcommand{\de}{\delta}

\newcommand{\cn}{\chi}
\newcommand{\lcn}{\chi^{\ell}}

\newcommand{\cV}{{\cal V}}

\newcommand{\nato}{\mathbb{N}_0}

\numberwithin{equation}{section}

\theoremstyle {nonumberplain}
\theoremseparator{:}
\theorembodyfont{\normalfont}
\theoremsymbol{\rule{1ex}{1ex}}
\newtheorem{proof}{Proof}

\theoremstyle{proofof}
\newtheorem{proofof}{Proof of}

\theoremsymbol{$\square$}

\begin{document}
\title{\bf Generalized Hypergraph Coloring}

\author{{{
Thomas Schweser}\thanks{partially supported by DAAD, Germany (as part of BMBF) and by the Ministry of Education Science, Research and Sport of the Slovak Republic within the project 57320575}
\thanks{
Technische Universit\"at Ilmenau, Inst. of Math., PF 100565, D-98684 Ilmenau, Germany. E-mail
address: thomas.schweser@tu-ilmenau.de}}
}

\date{}
\maketitle

\begin{abstract}
A smooth hypergraph property $\mathcal{P}$ is a class of hypergraphs that is hereditary and non-trivial, i.e., closed under induced subhypergraphs and it contains a non-empty hypergraph but not all hypergraphs. In this paper we examine $\mathcal{P}$-colorings of hypergraphs with smooth hypergraph properties $\mathcal{P}$. A $\mathcal{P}$-coloring of a hypergraph $H$ with color set $C$ is a function $\varphi:V(H) \to C$ such that $H[\varphi^{-1}(c)]$ belongs to $\mathcal{P}$ for all $c \in C$. Let $L: V(H) \to 2^C$ be a so called list-assignment of the hypergraph $H$. Then, a $(\mathcal{P},L)$-coloring of $H$ is a $\mathcal{P}$-coloring $\varphi$ of $H$ such that $\varphi(v) \in L(v)$ for all $v \in V(H)$. The aim of this paper is a characterization of $(\mathcal{P},L)$-critical hypergraphs. Those are hypergraphs $H$ such $H-v$ is $(\mathcal{P},L)$-colorable for all $v \in V(H)$ but $H$ itself is not. Our main theorem is a Gallai-type result for critical hypergraphs, which implies a Brooks-type result for $(\mathcal{P},L)$-colorable hypergraphs. In the last section, we prove a Gallai bound for the degree sum of $(\mathcal{P},L)$-critical locally linear hypergraphs.
\end{abstract}

\noindent{\small{\bf AMS Subject Classification:} 05C15 }

\noindent{\small{\bf Keywords:} Hypergraph decomposition, Vertex partition, Degeneracy, Coloring of hypergraphs, Hypergraph properties}

\section{Introduction and main results}
All hypergraphs considered in this paper are finite, undirected, and loopless but may contain multiple edges. Let $\mathcal{H}$ denote the class of all those hypergraphs. A \textbf{hypergraph property} $\mathcal{P}$ is a  isomorphism-closed subclass of $\mathcal{H}$; $\mathcal{P}$ is said to be \textbf{smooth} if $\mathcal{P}$ is closed under induced subhypergraphs (i.e., $\mathcal{P}$ is \textbf{hereditary}, and $\mathcal{P}$ contains a non-empty hypergraph, but not all hypergraphs (i.e., $\mathcal{P}$ is \textbf{non-trivial}. For graphs, lots of research has been done on the topic of coloring with respect to hereditary properties already (see \cite{BoBrMi97},  \cite{BoDrMi95}, \cite{MiSkre01}).

In the 1960s, Erd\H os and Hajnal \cite{ErdHaj66} introduced a coloring concept for hypergraphs. According to them, a \textbf{proper coloring} of a hypergraph $H$ is a function $\varphi: V(H) \to \mathbb{N}$ such that for each (hyper-)edge $e$ there are vertices $u,v$ contained in $e$ such that $\varphi(u) \neq \varphi(v)$. Since each edge of a graph contains exactly two vertices, this concept is a generalization of the usual coloring concept for graphs. Moreover, this definition enables the transfer of various famous results on colorings of graphs to the hypergraph case. For example, Brooks' Theorem \cite{brooks} was extended to hypergraphs by Jones \cite{Jones} in 1975.   

In this paper we regard the $\mathcal{P}$-list-coloring problem for hypergraphs. A \textbf{$\mathcal{P}$-coloring} of a hypergraph $H$ with color set $C$ is a function $\varphi:V(H) \to C$ such that for each $c \in C$ the subhypergraph $H[\varphi^{-1}(c)]$ belongs to $\mathcal{P}$. Given a list assignment $L:V(H) \to 2^C$, a \textbf{$(\mathcal{P},L)$ coloring} of $H$ is a $\mathcal{P}$-coloring $\varphi$ of $H$ such that $\varphi(v) \in L(v)$ for all $v \in V(H)$. 
The $\mathcal{P}$\textbf{-list-chromatic number} $\chi^\ell(H:\mathcal{P})$ of a hypergraph $H$ as the least integer $k$ such that $H$ is $(\mathcal{P},L)$-colorable for all list-assignments $L$ with $|L(v)| \geq k$ for all $v \in V(H)$.
It is notable that the $\mathcal{P}$-list-coloring problem is a natural extension of the ordinary list-coloring problem, where we consider the subclass $\mathcal{P}=\mathcal{O}$ of $\mathcal{H}$ consisting of all edgeless hypergraphs, and so 
$\chi^\ell(H:\mathcal{O})$ corresponds to the ordinary \textbf{list-chromatic number} $\chi^\ell(H)$ of $H$. For graphs, list-colorings were introduced by Erd\H os, Rubin, and Taylor in 1979 \cite{ErdRubTay79}.

When regarding colorings of graphs and hypergraphs, it is often useful to consider critical (hyper-)graphs. Following Dirac \cite{Dir52}, \cite{Dir53}, a graph $G$ is (vertex) $k$-critical if $\chi(G-v)<\chi(G)=k$ for every $v\in V(G)$. The hypergraph-equivalent was introduced by Lov\'asz \cite{Lov68}.

The aim of this paper is to extend various basic results for the list-chromatic number of hypergraphs. In particular, we present a Brooks-type result for the $\mathcal{P}$-list-chromatic number and a Gallai-type result for $(\mathcal{P},L)$-critical hypergraphs, i.e. hypergraphs $H$ that do not admit a $(\mathcal{P},L)$-coloring, but for each $v \in V(H)$ the subypergraph $H-v$ is $(\mathcal{P},L)$-colorable. In the last section, a bound for the number of edges in locally linear critcal hypergraphs is proven; the bound resembles Gallais bound for the class of chromatic critical graphs. 

\subsection{Notation and Basic Concepts}
In this paper, we will mainly use the notation of Schweser and Stiebitz \cite{SStieb}.
A \textbf{hypergraph} is a triple $H=(V,E,i)$, whereas $V$ and $E$ are two finite sets and $i:E \to 2^V$ is a function with $|i(e)| \geq 2$ for $e \in E$. Then, $V(H)=V$ is the \textbf{vertex set} of $H$ and its elements are the \textbf{vertices} of $H$. Furthermore, $E(H)=E$ is the \textbf{edge set} of $H$; its elements are the \textbf{edges} of $H$. Lastly, the mapping $i_H=i$ is the \textbf{incidence function} of $H$ and $i_H(e)$ is the set of vertices that are \textbf{incident} to the edge $e$ in $H$. The \textbf{empty} hypergraph is the hypergraph $H$ with $V(H)=E(H)=\ems$; we denote it by $H=\ems$.

For a hypergraph $H$ we use the following notation. The \textbf{order} $|H|$ of $H$ is the number of vertices of $H$. Let $e$ be an arbitrary edge of $H$. If $|i_H(e)| \geq 3$, the edge $e$ is said to be a \textbf{hyperedge}, otherwise, i.e. for $|i_H(e)|=2$, $e$ is an \textbf{ordinary} edge. Two edges $e,e'$ are \textbf{parallel}, if $e \neq e'$ and $i_H(e)=i_H(e')$. A \textbf{simple} hypergraph is a hypergraph without parallel edges. As usual, a $q$\textbf{-uniform} hypergraph $H$ is a hypergraph with $|i_H(e)|=q$ for all $e \in E$. Thus, a \textbf{graph} is just a $2$-uniform hypergraph; i.e. each edge is ordinary. As for hypergraphs, a \textbf{simple graph}  is a graph without parallel edges.

A hypergraph $H'$ is a \DF{subhypergraph} of $H$, written $H'\subseteq H$, if $V(H')\subseteq V(H)$, $E(H')\subseteq E(H)$, and $i_{H'}=i_H|_{E(H')}$.
Moreover, $H'$ is a \textbf{proper} subhypergraph of $H$, if $H' \subseteq H$ and $H' \neq H$ holds. Let $H_1$ and $H_2$ be two subhypergraphs of $H$. Then, $H_1 \cup H_2$ denotes the \textbf{union} of $H_1$ and $H_2$, that is, the subhypergraph of $H$ with $V(H')=V(H_1) \cup V(H_2)$, $E(H')=E(H_1)\cup E(H_2)$, and $i_{H'}=i_H|_{E(H')}$. Similarly, $H'=H_1 \cap H_2$ denotes the \textbf{intersection} of $H_1$ and $H_2$, it holds $V(H')=V(H_1) \cap V(H_2)$, $E(H')=E(H_1)\cap E(H_2)$, and $i_{H'}=i_H|_{E(H')}$.

Another important operation for the class of hypergraphs is the so called \textbf{merging}. Given two \textbf{disjoint} hypergraphs $H_1$ and $H_2$, that is, $V(H^1) \cap V(H^2) = \ems$ and $E(H^1) \cap E(H^2) = \ems$, and a vertex $v^*$ that is neither in $V(H^1)$ nor in $V(H^2)$, we define a new hypergraph $H$ as follows. Let $V(H)=((V(H^1) \cup V(H^2))\sm \{v^1,v^2\}) \cup \{v^*\}$, $E(H)=E(H^1) \cup E(H^2)$, and
$$i_H(e)=
\begin{cases}
i_{H^j}(e) & \text{if } e\in E(H^j), v^j \not \in i_{H^j}(e)~(j \in \{1,2\}),\\
(i_{H^j}(e)\sm \{v^j\}) \cup \{v^*\} & \text{if } e\in  E(H^j), v^j \in i_{H^j}(e)~(j \in \{1,2\}).
\end{cases}$$
In this case, we say that $H$ is obtained from $H^1$ and $H^2$ by merging $v^1$ and $v^2$ to $v^*$.

Let $H$ be a hypergraph and let $X \subseteq V(H)$ be a vertex set. We consider two new hypergraphs. First, $H[X]$ is the subhypergraph of $H$ with
$$V(H[X])=X, E(H[X])=\set{e\in E}{i_H(e)\subseteq X} \mbox{, and } i_{H[X]}=i_H|_{E(H[X])}.$$
 We say that $H[X]$ is the subhypergraph of $H$ \textbf{induced} by $X$.  More general, a hypergraph $H'$ is said to be an \textbf{induced subhypergraph} of $H$ if $V(H') \subseteq V(H)$ and $H'=H[V(H')]$. Secondly, $H(X)$ is the hypergraph with
$$V(H(X))=X, E(H(X))=\set{e\in E}{|i(e)\cap X|\geq 2},$$
and
$$i_{H(X)}(e)=i_H(e) \cap X \mbox{ for all } e\in E(H(X)). $$
We say that $H(X)$ is the hypergraph obtained by \textbf{shrinking} $H$ \textbf{to} $X$. Note that $H(X)$ does not necessarily need to be a subhypergraph of $H$. As usual, we define $H-X=H[V(H) \setminus X]$ and $H \div X = H(V(H) \setminus X)$. For the sake of readability, if $X=\{v\}$ for some vertex $v$, we will write $H - v$ and $H \div v$ instead of $H - X$ and $H \div X$. To obtain the reverse operation to $H-v$, let $H'$ be a proper induced subhypergraph of $H$ and let $v \in V(H) \setminus V(H')$. Then, $H'+v=H[V(H') \cup \{v\}]$.

Let $H$ be a non-empty hypergraph. A \textbf{hyperpath of length} $q$ in $H$ is a sequence $(v_1, e_1, v_2, e_2, \ldots, v_q, e_q, v_{q+1})$ of distinct vertices
$v_1, v_2, \ldots, v_{q+1}$ of $H$ and distinct edges
$e_1, e_2, \ldots, e_q$ of $H$ such that
$\{v_i,v_{i+1}\}\subseteq i_H(e_i)$ for $i=1, 2, \ldots, q$. The hypergraph $H$ is \textbf{connected} if there is a hyperpath in $H$ between any two of its vertices. A \textbf{component} of $H$ is a maximal connected subhypergraph of $H$. A \textbf{separating vertex} of $H$ is a vertex $v \in V(H)$ such that $H$ is the union of two induced subhypergraph $H_1$ and $H_2$ with $V(H_1)\cap V(H_2)=\{v\}$ and $|H_i|\geq 2$ for $i\in \{1,2\}$. Note that $v$ is a separating vertex if and only if $H \div v$ has more components than $H$. Regarding edges, an edge $e$ is a \textbf{bridge} of a hypergraph $H$, if $H-e$ has $|e|-1$ more components than $H$. Finally, a \textbf{block} of $H$ is a maximal connected subhypergraph of $H$ that has no separating vertex. Thus, every block of $H$ is a connected induced subhypergraph of $H$. It is easy to see that two blocks of $H$ have at most one vertex in common and that a vertex $v$ is a separating vertex of $H$ if and only if it is contained in more than one block. By $\mathcal{B}(H)$ we denote the set of all blocks of $H$.

As usual, we write $H=K_n$ if $H$ is a complete graph of order $n$ and $H=C_n$ if $H$ is a cycle of order $n$ consisting only of ordinary edges. A cycle $C_n$ is called \textbf{odd} or \textbf{even} depending on whether its order $n$ is odd or even. Lastly, given a simple hypergraph $H$ and an integer $t \geq 1$, we denote by $H'=tH$ the hypergraph which results from $H$ by replacing each edge of $H$ by $t$ parallel edges.

\subsection{Degeneracy of hypergraphs}
For a hypergraph $H$ and a vertex $v$ from $V(H)$, let
$$E_H(v)=\set{e\in E(H)}{v\in i_H(e)}.$$
The \textbf{degree} of $v$ in $H$ is defined as $d_H(v)=|E_H(v)|$.  As usual, $\de(H)=\min_{v\in V(H)} d_H(v)$ is the \DF{minimum degree} of $H$ and $\De(H)=\max_{v\in V(H)}d_H(v)$ is the \DF{maximum degree} of $H$. If $H$ is empty, we set $\de(H)=\De(H)=0$. Furthermore, the degree-sum over all vertices of $H$ is denoted by $$d(H) = \sum_{v \in V(H)} d_H(v).$$ A non-empty hypergraph $H$ is said to be $r$-\textbf{regular} or, briefly,   \textbf{regular} if each vertex in $H$ has degree $r$.

If $e$ is an ordinary edge of $H$ with $i_H(e)=\{u,v\}$, we brievly write $e=uv$ and $e=vu$. The \textbf{multiplicity} of two distinct vertices $u$ and $v$ in $H$ is defined by
$$\mu_H(u,v)=|\set{e\in E(H)}{e=uv}|.$$
Note that if $v\in V(H)$, then every vertex $u\in V(H)\sm \{v\}$ satisfies
\begin{align*}
d_{H\div v}(u)=d_H(u)-\mu_H(u,v).
\end{align*}

In order to prove our main result in Section~1.5, we need some results related to degeneracy. We say that a hypergraph $H$ is \textbf{strictly} $k$\textbf{-degenerate} ($k \geq 0$), if in every non-empty subhypergraph $H'$ of $H$ there is a vertex $v$ such that $d_{H'}(v) < k$. Thus, $H$ is strictly $0$-degenerate if and only if $H = \ems$, and $H$ is strictly $1$-degenerate if and only if $E(H) = \ems$. A natural extension of degeneracy can be obtained by regarding functions instead of a fixed integer. Let $H$ be a hypergraph and let $h:V(H) \to \mathbb{N}_0$. We say that $H$ is \textbf{strictly} $h$\textbf{-degenerate} if in each non-empty subhypergraph $H'$ of $H$ there is a vertex $v$ such that $d_{H'}(v) < h(v)$.

\subsection{Partitions and colorings of hypergraphs}

Let $H$ be a hypergraph and let $p \geq 1$ be an integer. A $p$\textbf{-partition} or just \textbf{partition} of $H$ is a sequence $(H_1,H_2,\ldots,H_p)$ of pairwise induced subhypergraphs of $H$ with $V(H)=V(H_1) \cup V(H_2) \cup \ldots \cup V(H_p)$; the subhypergraphs $H_i$ are called \textbf{parts} of the partition. Note that a part may be empty.

A \textbf{coloring} of $H$ with \textbf{color set} $C$ is a function $\varphi:V(H) \to C$. If $|C|=k$, we also say that $\varphi$ is a $k$\textbf{-coloring} of $H$. For $c \in C$, the set $\varphi^{-1}(c)=\{v \in V(H) ~ | ~ \varphi(v)=c\}$ is called a \textbf{color class} of $H$ with respect to $\varphi$. A first natural extension of the coloring concept is to assign each vertex a list of colors from which the color of the vertex has to be chosen. More formally, given a hypergraph $H$ and a color set $C$, a \textbf{list-assignment} $L$ is a function from $V(H)$ to $2^C$. An $L$\textbf{-coloring} of $H$ is a coloring $\varphi$ of $H$ such that $\varphi(v) \in L(v)$ for all $v \in V(H)$. Of course, a $p$-partition $(H_1,H_2,\ldots,H_p)$ of a hypergraph $H$ can always be regarded as a coloring $\varphi$ of $H$ with color set $\{1,2,\ldots,p\}$ and vice versa; the color classes $\varphi^{-1}(c)$ correspond to the parts $H_c=H[\varphi^{-1}(c)]$.

Coloring of graphs and hypergraphs is a huge topic within graph theory and various well-known restrictions have been examined already. For example, a \textbf{proper coloring} or \textbf{proper} $L$\textbf{-coloring} of a hypergraph $H$ is a coloring, respectively $L$-coloring of $H$, such that each color class induces an edgeless subhypergraph of $H$. The \textbf{chromatic number} $\chi(H)$ of a hypergraph $H$ is the least integer $k$ such that $H$ admits a proper $k$-coloring. Similarly, the \textbf{list-chromatic number} $\chi^\ell(H)$ is the least integer $k$ such that $H$ admits a proper $L$-coloring for each list assignment $L$ satisfying $|L(v)| \geq k$ for all $v \in V(H)$. Since $\lcn(H) = k$ implies that $H$ has a proper $L$-coloring for the constant list-assignment $L$ with $L(v) = \{1,2, \ldots, k\}$, it clearly holds $\cn(H)\leq \lcn(H)$. For simple graphs, the list-chromatic number was introduced independently by Vizing \cite{Vizing} and Erd\H os, Rubin and Taylor \cite{ErdRubTay79} (they use the term \textbf{choice number}).

\subsection{Hypergraph Properties}
Let $\mathcal{H}$ be the class of all hypergraphs. A \textbf{hypergraph property} $\mathcal{P}$ is a subclass of $\mathcal{H}$ that is closed under isomorphisms. In this section, we regard a special type of hypergraph properties. We say that $\mathcal{P}$ is a \textbf{smooth} hypergraph property, if the following two conditions hold.

\begin{itemize}
\item[(P1)] $\mathcal{P}$ is \textbf{hereditary}, i.e., $\mathcal{P}$ is closed under induced subhypergraphs, and
\item[(P2)] $\mathcal{P}$ is \textbf{non-trivial}, i.e., $\mathcal{P}$ contains a non-empty hypergraph but is different from $\mathcal{H}$.
\end{itemize}

Hereditary properties for graphs have been studied extensively, an interesting overview can be found in \cite{BoBrFr97}. Some important hereditary properties that are smooth, in particular, are the following:
\begin{align*}
\mathcal{O} &= \{H \in \mathcal{H} ~|~ H \text{ is edgeless} \},\\
\mathcal{S}_k &= \{H \in \mathcal{H} ~|~ \Delta(H) \leq k\}, \text{ and}\\
\mathcal{D}_k &= \{H \in \mathcal{H} ~|~ H \text{ is strictly } (k+1)\text{-degenerate} \}
\end{align*}
with $k \geq 0$.
For a smooth hypergraph property $\mathcal{P}$ let $$\mathcal{F}(\mathcal{P})=\{H ~|~ H \not \in \mathcal{P}, \text{ but } H-v \in \mathcal{P} \text{ for all } v \in V(H)\},$$ and let $$d(\mathcal{P})=\min \{\delta(H) ~|~ H \in \mathcal{F}(\mathcal{P})\}.$$

The statements of the next proposition are well-known for graphs and easy to extend to hypergraphs.

\begin{proposition}\label{prop_smoothprop}
Let $\mathcal{P}$ be a smooth hypergraph property. Then, the following statements hold:

\begin{itemize}
\item[\upshape (a)] $\mathcal{P}$ contains $K_0$ and $K_1$.
\item[\upshape (b)] A hypergraph $H$ belongs to $\mathcal{F}(\mathcal{P})$ if and only if each proper induced subhypergraph of $H$ belongs to $\mathcal{P}$, but $H$ does not.
\item[\upshape (c)] A hypergraph $H$ does not belong to $\mathcal{P}$ if and only if $H$ contains an induced subhypergraph from $\mathcal{F}(\mathcal{P})$.
\item[\upshape (d)] The class $\mathcal{F}(\mathcal{P})$ is non-empty and $d(\mathcal{P})$ is from $\mathbb{N}_0$.
\item[\upshape (e)] If a hypergraph $H$ does not belong to $\mathcal{P}$, but $H-v \in \mathcal{P}$ for some $v \in V(H)$, then $d_H(v) \geq d(\mathcal{P})$.
\end{itemize}
\end{proposition}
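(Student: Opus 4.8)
The plan is to establish the five items in the order given, since each one builds on its predecessors together with the two defining properties (P1) (heredity) and (P2) (non-triviality). For (a), I would start from a non-empty $H \in \mathcal{P}$, which exists by (P2). For any $v \in V(H)$ the single-vertex induced subhypergraph $H[\{v\}]$ carries no edges (every edge is incident with at least two vertices), so $H[\{v\}] = K_1$, and heredity yields $K_1 \in \mathcal{P}$; passing to the empty induced subhypergraph $H[\emptyset] = K_0$ then gives $K_0 \in \mathcal{P}$.

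Item (b) is essentially a reformulation of the definition of $\mathcal{F}(\mathcal{P})$. For the forward implication, given $H \in \mathcal{F}(\mathcal{P})$ and a proper induced subhypergraph $H'$, I would pick a vertex $v \in V(H) \setminus V(H')$ and observe that $H'$ is an induced subhypergraph of $H - v \in \mathcal{P}$, so heredity places $H' \in \mathcal{P}$. The converse is immediate, since the stated hypothesis applies in particular to each subhypergraph $H - v$. Here one should note that $H \in \mathcal{F}(\mathcal{P})$ forces $H \neq K_0$ by (a), so $V(H) \neq \emptyset$ and a vertex to delete always exists.

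For (c), the direction ``$H$ contains an $\mathcal{F}(\mathcal{P})$-subhypergraph $\Rightarrow H \notin \mathcal{P}$'' is immediate from heredity. For the other direction I would take, among all induced subhypergraphs of $H$ not lying in $\mathcal{P}$ (a non-empty family, since $H$ itself qualifies), one of \emph{minimum order}; minimality forces every proper induced subhypergraph of it to belong to $\mathcal{P}$, so (b) identifies it as a member of $\mathcal{F}(\mathcal{P})$. Item (d) then follows at once: (P2) supplies a hypergraph outside $\mathcal{P}$, whence (c) makes $\mathcal{F}(\mathcal{P})$ non-empty, and the minimum of the non-empty set $\{\delta(H) : H \in \mathcal{F}(\mathcal{P})\}$ of non-negative integers exists by well-ordering, so $d(\mathcal{P}) \in \mathbb{N}_0$.

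The step requiring the most care is (e), which is where I would concentrate. Given $H \notin \mathcal{P}$ with $H - v \in \mathcal{P}$, part (c) produces an induced subhypergraph $H' \in \mathcal{F}(\mathcal{P})$. The crucial point is that $v \in V(H')$: were $v \notin V(H')$, then $H'$ would be an induced subhypergraph of $H - v \in \mathcal{P}$, and heredity would force $H' \in \mathcal{P}$, contradicting $H' \in \mathcal{F}(\mathcal{P})$. Since $H'$ is induced in $H$, each edge of $H'$ incident with $v$ is also an edge of $H$ incident with $v$, giving $d_{H'}(v) \leq d_H(v)$. The chain $d(\mathcal{P}) \leq \delta(H') \leq d_{H'}(v) \leq d_H(v)$ then finishes the argument. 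The only real subtlety, and the one I would flag, is ensuring that $v$ is not deleted when extracting the critical subhypergraph $H'$ — which is precisely what the hypothesis $H - v \in \mathcal{P}$ secures.
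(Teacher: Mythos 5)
Your proposal is correct and follows essentially the same route as the paper's proof: the same heredity argument for (a) and (b), the same minimum-order extraction of an $\mathcal{F}(\mathcal{P})$-member for (c), and for (e) the same key observation that the critical induced subhypergraph must contain $v$ (else it would sit inside $H-v\in\mathcal{P}$), yielding $d(\mathcal{P})\leq\delta(H')\leq d_{H'}(v)\leq d_H(v)$. No gaps; your version merely spells out a few details the paper leaves implicit.
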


\begin{proof}
Since $\mathcal{P}$ is non-trivial, $\mathcal{P}$ contains a non-empty hypergraph $H$. As $\mathcal{P}$ is hereditary, it contains all induced subhypergraphs of $H$ and, therefore, $K_0$ and $K_1$. Statement (b) follows from (P1) and the definition of $\mathcal{F}(\mathcal{P})$ since $H-v$ is a proper induced subhypergraph of $H$ for all $v \in V(H)$. In order to prove (c), let $H$ be a hypergraph. If $H$ contains an induced subhypergraph $G$ from $\mathcal{F}(\mathcal{P})$, then clearly $H \not \in \mathcal{P}$ (by (P1)). Conversely, if $H$ does not belong to $\mathcal{P}$, there is an induced subhypergraph $G$ of $H$ such that $G \not \in \mathcal{P}$ and $|G|$ is minimum. Then, $G-v \in \mathcal{P}$ for all $v \in V(G)$ and $G$ belongs to $\mathcal{F}(\mathcal{P})$. Since $\mathcal{P}$ is different from $\mathcal{H}$ (by (P2)), statement (d) is an immediate consequence of (c).

It remains to prove statement (e). To this end, let $H \not \in \mathcal{P}$ be a hypergraph such that $H-v \in \mathcal{P}$ for some $v \in V(H)$. By (c), $H$ contains a subhypergraph $G$ from $\mathcal{F}(\mathcal{P})$. Then, $G$ contains $v$, since otherwise $G$ would be an induced subhypergraph of $H-v$ and would belong to $\mathcal{P}$ (by (P1)). Thus, $$d(\mathcal{P}) \leq \delta(G) \leq d_G(v) \leq d_H(v),$$ which proves (e).
\end{proof}

Hypergraph properties can be useful in order to generalize coloring concepts for hypergraphs. Let $\mathcal{P}$ be an arbitrary hypergraph property and let $C$ be a color set. We say that a coloring $\varphi: V(H) \to C$ is a $\mathcal{P}$\textbf{-coloring} of the hypergraph $H$, if each color class $\varphi^{-1}(c)$ induces a hypergraph belonging to  $\mathcal{P}$  ($c \in C$). Furthermore, the $\mathcal{P}$\textbf{-chromatic number} $\chi(H:\mathcal{P})$ of $H$ is the least integer $k$ such that $H$ admits a $\mathcal{P}$-coloring with color set $\{1,2,\ldots,k\}$. Similar, given a hypergraph $H$, a color set $C$, and a list-assignment $L: V(H) \to 2^C$, a $(\mathcal{P},L)$\textbf{-coloring} of $H$ is an $L$-coloring $\varphi$ of $H$ such that $H[\varphi^{-1}(c)] \in \mathcal{P}$ for all $c \in C$. If $H$ admits a $(\mathcal{P}, L)$-coloring, we also say that $H$ is $(\mathcal{P}, L)$\textbf{-colorable}. Finally, we define the $\mathcal{P}$\textbf{-list-chromatic number} $\chi^\ell(H:\mathcal{P})$ of a hypergraph $H$ as the least integer $k$ such that $H$ is $(\mathcal{P},L)$-colorable for all list-assignments $L$ with $|L(v)| \geq k$ for all $v \in V(H)$. Note that the case $\mathcal{P}=\mathcal{O}$ corresponds to proper ($L$-)colorings.

If $\mathcal{P}$ is a smooth hypergraph property, then $K_0,K_1 \in \mathcal{P}$, which implies that $$\chi(H:\mathcal{P}) \leq \chi^\ell(H:\mathcal{P}) \leq |H|$$ for all hypergraphs $H$. Moreover, it holds $$\chi^\ell(H:\mathcal{P}) - 1 \leq \chi^\ell(H-v: \mathcal{P}) \leq \chi^\ell(H:\mathcal{P})$$ for all hypergraphs $H$ and for each vertex $v \in V(H)$. The second inequality is obvious. In order to obtain the first inequality, assume that $\chi^\ell(H,\mathcal{P})=k$, but $\chi^\ell(H-v:\mathcal{P}) \leq k-2$ for some vertex $v \in V(H)$, that is, $H-v$ is $(\mathcal{P},L')$-colorable for each list-assignment $L'$ such that $|L'(u)| \geq k-2$ for all $u \in V(H - v)$. Now let $L$ be an arbitrary list-assignment for $H$ with $|L(u)| \geq k-1$ for all $u \in V(H)$. Then, we may assign $v$ an arbitrary color $c$ from $L(v)$ and set $L'(u)=L(u) \setminus \{c\}$ for all $u \in V(H) \setminus \{v\}$. As a consequence, $L'$ is a list-assignment for $V(H-v)$ such that $|L'(u)| \geq k-2$ for all $u \in V(H-v)$ and, thus, $H-v$ admits an $L'$-coloring, which leads  to an $L$-coloring of $H$. Since $L$ was chosen arbitrarily, this implies that $\chi^\ell(H:\mathcal{P}) \leq k-1$, a contradiction.

Let $L$ be a list-assignment for a hypergraph $H$. We say that $H$ is $(\mathcal{P},L)\textbf{-critical}$ if $H-v$ is $(\mathcal{P},L)$-colorable for all $v \in V(H)$, but $H$ itself is not.

\begin{proposition}\label{prop_pl-vertex-crit}
Let $\mathcal{P}$ be a smooth graph property with $d(\mathcal{P})=r$, let $H$ be a non-empty hypergraph, and let $L$ be a list-assignment for $H$. If $H$ is $(\mathcal{P},L)$-critical, then the following conditions hold:
\begin{itemize}
\item[\upshape (a)] $d_H(v) \geq r|L(v)|$ for all $v \in V(H)$.
\item[\upshape (b)] Let $v$ be a vertex of $H$ with $d_H(v)=r|L(v)|$, and let $\varphi$ be a $(\mathcal{P},L)$-coloring of $H-v$ with color set $C$. Moreover, for $c \in L(v)$, let $$H_{c,v}=H[\varphi^{-1}(c) \cup \{v\}] \text{  and } d_c = d_{H_{c,v}}(v)$$ Then, $d_c=r$ for all $c \in L(v)$ and $E_H(v)= \bigcup_{c \in L(v)} E_{H_{c,v}}(v).$
\end{itemize}
\end{proposition}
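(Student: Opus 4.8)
The plan is to combine an extension argument with a double-counting of the edges incident with $v$. Fix a vertex $v$ and, using criticality, a $(\mathcal{P},L)$-coloring $\varphi$ of $H-v$ with color set $C$. For each $c \in L(v)$ consider the coloring $\varphi_c$ of $H$ obtained from $\varphi$ by setting $\varphi_c(v)=c$. The only color class whose induced subhypergraph changes is the class of color $c$, which becomes $H_{c,v}=H[\varphi^{-1}(c)\cup\{v\}]$; all other classes are unaffected and hence still induce hypergraphs in $\mathcal{P}$. Since $H$ is not $(\mathcal{P},L)$-colorable, $\varphi_c$ cannot be a $(\mathcal{P},L)$-coloring, so $H_{c,v}\notin\mathcal{P}$. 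On the other hand $H_{c,v}-v=H[\varphi^{-1}(c)]\in\mathcal{P}$, because $\varphi$ is a $(\mathcal{P},L)$-coloring of $H-v$ and the class of color $c$ induces exactly $H[\varphi^{-1}(c)]$. Applying Proposition~\ref{prop_smoothprop}(e) to $H_{c,v}$ therefore yields $d_c=d_{H_{c,v}}(v)\geq d(\mathcal{P})=r$ for every $c\in L(v)$.

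For the matching upper bound I would count, for each edge $e\in E_H(v)$, how many of the sets $E_{H_{c,v}}(v)$ with $c\in L(v)$ contain it. By definition $e\in E_{H_{c,v}}(v)$ exactly when $i_H(e)\setminus\{v\}\subseteq\varphi^{-1}(c)$, i.e.\ when all vertices of $e$ other than $v$ receive color $c$ under $\varphi$. As $|i_H(e)|\geq 2$ this set of vertices is non-empty, and it can be monochromatic for at most one color, so each $e\in E_H(v)$ lies in at most one $E_{H_{c,v}}(v)$. Consequently
\[
r|L(v)|\;\leq\;\sum_{c\in L(v)} d_c \;=\;\sum_{c\in L(v)}|E_{H_{c,v}}(v)|\;\leq\;|E_H(v)|\;=\;d_H(v),
\]
which is statement (a).

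To obtain (b), assume $d_H(v)=r|L(v)|$, so that the chain of inequalities above collapses to equalities. Since each $d_c\geq r$ and $\sum_{c\in L(v)} d_c=r|L(v)|$, every summand must equal $r$, giving $d_c=r$ for all $c\in L(v)$. Equality $\sum_{c\in L(v)}|E_{H_{c,v}}(v)|=|E_H(v)|$, together with the fact that the sets $E_{H_{c,v}}(v)$ are pairwise disjoint subsets of $E_H(v)$, forces their union to be all of $E_H(v)$, that is, $E_H(v)=\bigcup_{c\in L(v)}E_{H_{c,v}}(v)$.

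The argument is essentially a weighted version of the classical proof that a $k$-critical graph has minimum degree at least $k-1$, so I do not expect a serious obstacle; the one point that needs care is the double-counting step. Specifically, one must verify that an edge incident with $v$ contributes to the degree of $v$ in at most one of the monochromatic subhypergraphs $H_{c,v}$, and that in the equality case every such edge is captured by some color of $L(v)$ (not by a color lying outside the list). Pinning down the disjointness of the $E_{H_{c,v}}(v)$ and the exact edge count is precisely what upgrades the inequality in (a) to the structural conclusion in (b).
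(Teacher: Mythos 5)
Your proposal is correct and follows essentially the same route as the paper: extend $\varphi$ to $v$ by each color $c\in L(v)$, conclude $H_{c,v}\notin\mathcal{P}$ while $H_{c,v}-v\in\mathcal{P}$, apply Proposition~\ref{prop_smoothprop}(e) to get $d_c\geq r$, and sum over $c\in L(v)$, with (b) falling out of the equality case. The only difference is that you spell out the disjointness of the sets $E_{H_{c,v}}(v)$ (via $i_H(e)\setminus\{v\}$ being non-empty and monochromatic for at most one color), a step the paper leaves implicit in the inequality $d_H(v)\geq\sum_{c\in L(v)}d_c$.
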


\begin{proof}
Let $v$ be an arbitrary vertex of $H$. Since $H$ is $(\mathcal{P},L)$-critical, there is a $(\mathcal{P},L)$-coloring $\varphi$ of $H-v$. As $H$ is not $(\mathcal{P},L)$-colorable, it holds that $H[\varphi^{-1}(c) \cup \{v\}]$ is not in $\mathcal{P}$ for all $c \in L(v)$, and thus, by Proposition~\ref{prop_smoothprop}(e), $$r=d(\mathcal{P}) \leq d_{H[\varphi^{-1}(c) \cup \{v\}]} (v) = d_c$$ for each $c \in L(v)$. Consequently, we obtain $$d_H(v) \geq \sum_{c \in L(v)} d_c \geq r|L(v)|.$$ This proves (a). If $v$ is a vertex of $H$ with $d_H(v) = r|L(v)|,$ then the above inequalities immediately imply that $r=d_c$ for all $c \in L(v)$ and that $E_H(v)= \bigcup_{c \in L(v)} E_{H_{c,v}}(v)$, which proves (b).
\end{proof}

Let $\mathcal{P}$ be a smooth hypergraph property with $d(\mathcal{P})=r$, let $H$ be a hypergraph, and let $L$ be a list-assignment for $H$ such that $H$ is $(\mathcal{P},L)$-critical. By $V(H,\mathcal{P},L)$, we denote the set of vertices $v \in V(H)$ with $d_H(v)=r|L(v)|$ in $H$. A vertex $v \in V(H)$ is said to be a \textbf{low vertex} if $v \in V(H,\mathcal{P},L)$, and a \textbf{high vertex}, otherwise. Moreover, we call $H(V(H,\mathcal{P},L))$ the \textbf{low-vertex hypergraph} with respect to $(H,\mathcal{P},L)$. Note that $H(V(H,\mathcal{P},L))$, contrary to the case for graphs, is not necessarily a subhypergraph of $H$. Our main result is a Gallai-type theorem that characterizes the structure of the low-vertex hypergraph. For simple graphs, it was obtained in 1995 by Borowiecki, Drgas-Burchardt and Mih\'ok \cite{BoDrMi95}. We say that a hypergraph $H$ is a \textbf{brick}, if $H=tC_n$ for some $t \geq 1$ and $n \geq 3$ odd or $H=tK_n$ for some $t,n \geq 1$.

\begin{theorem}\label{theorem_main-result}
Let $\mathcal{P}$ be a smooth hypergraph property with $d(\mathcal{P})=r$, let $H$ be a non-empty hypergraph, and let $L$ be a list-assignment for $H$ such that $H$ is $(\mathcal{P},L)$-critical and $F=H(V(H,\mathcal{P},L))$ is non-empty. If $B$ is a block of $F$, then $B$ is a brick, or $B \in \mathcal{F}(\mathcal{P})$ and $B$ is $r$-regular, or $B \in \mathcal{P}$ and $\Delta(B) \leq r$.
\end{theorem}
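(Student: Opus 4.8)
The plan is to fix a block $B$ of $F$, write $U=V(B)\subseteq V(H,\mathcal P,L)$, and argue by contradiction: I assume that $B$ is neither a brick, nor an $r$-regular member of $\mathcal F(\mathcal P)$, nor a member of $\mathcal P$ with $\Delta(B)\le r$, and I aim to produce a $(\mathcal P,L)$-coloring of $H$. The starting observation is that every proper induced subhypergraph of a $(\mathcal P,L)$-critical hypergraph is $(\mathcal P,L)$-colorable: such a subhypergraph is induced in $H-v$ for some $v$, and $H-v$ is $(\mathcal P,L)$-colorable, while $(\mathcal P,L)$-colorability passes to induced subhypergraphs since $\mathcal P$ is hereditary by (P1). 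Since $U\neq\emptyset$, this gives a $(\mathcal P,L)$-coloring $\psi$ of $H-U$, and the whole difficulty is to extend $\psi$ across $U$.

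For $v\in U$ I would set $L'(v)=\{c\in L(v): H[\psi^{-1}(c)\cup\{v\}]\in\mathcal P\}$, the colours still locally admissible at $v$, and prove the degree bound $r|L'(v)|\ge d_B(v)$. Indeed $v$ is a low vertex, so $d_H(v)=r|L(v)|$. For each forbidden colour $c\in L(v)\setminus L'(v)$ one has $H[\psi^{-1}(c)\cup\{v\}]\notin\mathcal P$ while $H[\psi^{-1}(c)]\in\mathcal P$, so Proposition~\ref{prop_smoothprop}(e) furnishes at least $r$ edges of $E_H(v)$ contained entirely in $\psi^{-1}(c)\cup\{v\}$; each such edge meets $U$ only in $v$. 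These edge sets are pairwise disjoint (distinct colours) and disjoint from the edges counted by $d_B(v)$, which meet $U$ in at least two vertices. Hence $r(|L(v)|-|L'(v)|)+d_B(v)\le d_H(v)=r|L(v)|$, i.e.\ $r|L'(v)|\ge d_B(v)$.

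Next I would record a localization fact: the block $B$ carries all within-$U$ interaction. If $e\in E(H)$ satisfies $|i_H(e)\cap U|\ge 2$, then every low vertex incident with $e$ already lies in $U$; otherwise such a vertex $w$ would be joined through $e$ to at least two vertices of $B$ inside $F$, producing a connected subhypergraph of $F$ properly containing $B$ with no separating vertex and contradicting the maximality of the block. Consequently $e\in E(B)$ with $i_B(e)=i_H(e)\cap U$. With this in hand I would extend $\psi$ by colouring the vertices of $U$ in a reverse strict-degeneracy order of $B$ with threshold $h(v)=r|L'(v)|$, maintaining at each step that every colour class of $H$ stays in $\mathcal P$. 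When $v$ is coloured, a colour is blocked only if adding $v$ to the current class leaves $\mathcal P$, which by Proposition~\ref{prop_smoothprop}(e) consumes at least $r$ already-finished edges at $v$; by localization these are edges of $B$ to earlier vertices or edges meeting $U$ only in $v$, so the bound $r|L'(v)|\ge d_B(v)$ together with the ordering leaves a free colour at each step. This produces a $(\mathcal P,L)$-coloring of $H$, the desired contradiction, \emph{provided} $B$ is strictly $(r|L'|)$-degenerate. Thus $B$ cannot be strictly $(r|L'|)$-degenerate.

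The whole argument therefore reduces to an engine I regard as the main obstacle: a \emph{connected} hypergraph $B$ with $r|L'(v)|\ge d_B(v)$ fails to be strictly $(r|L'|)$-degenerate only when the inequality is everywhere tight and $B$ is one of the three listed types. A short degeneracy computation first forces tightness $d_B(v)=r|L'(v)|$, because a witnessing non-degenerate subhypergraph must, by connectivity, coincide with $B$. The classification of the remaining tight connected hypergraphs is then the hypergraph–property analogue of the Erd\H{o}s--Rubin--Taylor characterization of degree-choosable graphs: the $\mathcal P$-independent obstructions are exactly the bricks $tC_n$ ($n$ odd) and $tK_n$, whereas the $\mathcal P$-dependent ones split according to whether $B\in\mathcal F(\mathcal P)$ (then $r$-regular, since $|L'|\equiv 1$) or $B\in\mathcal P$ with $\Delta(B)\le r$. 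Carrying out this classification — ruling out every other arrangement of (hyper)edges inside a block without creating a strictly-degenerate, hence colourable, configuration, and controlling how the shrinking and merging operations interact with the family $\mathcal F(\mathcal P)$ — is where I expect the essential work to lie.
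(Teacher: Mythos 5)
Your opening moves match the paper's: fix the block $B$, take a $(\mathcal{P},L)$-coloring of $H-V(B)$ (which exists by criticality), and try to extend it across $B$ by a degeneracy-type argument, using Proposition~\ref{prop_smoothprop}(e) to convert ``adding $v$ to a class leaves $\mathcal{P}$'' into ``$v$ has at least $r$ edges into that class.'' But there are two genuine gaps, and the second is fatal to the route you chose. First, the capacity bookkeeping via $L'(v)$ and the scalar threshold $h(v)=r|L'(v)|$ is too coarse: a colour $c\in L'(v)$ with $0<d_{H[\psi^{-1}(c)\cup\{v\}]}(v)<r$ has only $r-d_{H[\psi^{-1}(c)\cup\{v\}]}(v)$ safe slots left at $v$, not $r$, because the edges from $v$ into $\psi^{-1}(c)$ count toward the threshold of Proposition~\ref{prop_smoothprop}(e) but are invisible to the degeneracy ordering of $B$. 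The paper repairs exactly this by working with the \emph{vector} function $f_i(v)=\max\{0,\,r-d_{H_i+v}(v)\}$ per colour, and verifying $\sum_i f_i(v)\ge d_B(v)$ from $d_H(v)=r|L(v)|$.

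Second, and more seriously, your ``engine'' is not only left unproven but is false in the scalar form you state it. If $h(v)\ge d_B(v)$ for all $v$ with equality everywhere, then $B$ itself is a non-empty subhypergraph with no vertex of degree less than $h$, so \emph{every} tight connected hypergraph fails to be strictly $h$-degenerate --- there is no trichotomy to extract at the level of a single function $h$. The distinction between colourable and uncolourable tight configurations lives entirely in how the degree budget is split among the $p$ colours, i.e., in the vector function, and the correct classification is the characterization of hard pairs $(B,f)$ with $\sum_i f_i(v)\ge d_B(v)$ (types (M), (K), (C) and mergings) given in Theorem~\ref{Theorem:Hauptsatz}, quoted from Schweser--Stiebitz. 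That theorem is what the paper invokes to conclude that $B$ is a brick (types (K), (C)) or, in the monoblock case (M), that $d_B(v)\le r$ for all $v$ and hence either $B\in\mathcal{P}$ with $\Delta(B)\le r$ or $B\in\mathcal{F}(\mathcal{P})$ and $r$-regular. What you have deferred as ``where I expect the essential work to lie'' is precisely this theorem; without it (or an equivalent vector-valued classification), the proof does not close.
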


The proof of Theorem~\ref{theorem_main-result} is presented in the next section. In the remaining part of this section, we will show how to use the above theorem in order to obtain a Brooks-type result for the $\mathcal{P}$-chromatic number as well as for the $\mathcal{P}$-list-chromatic number. To this end, let $\mathcal{P}$ be a smooth hypergraph property. We say that a hypergraph $H$ is $(\chi^\ell, \mathcal{P})$\textbf{-critical} if $\chi^\ell(G:\mathcal{P}) < \chi^\ell(H:\mathcal{P})$ for each proper induced subhypergraph $G$ of $H$. Note that $H$ is $(\chi^\ell, \mathcal{P})$-critical if and only if $\chi^\ell(H-v:\mathcal{P})=\chi^\ell(H:\mathcal{P})-1$ for each vertex $v \in V(H)$.

\begin{lemma}\label{lemma_brooks}
If $\mathcal{P}$ is a smooth hypergraph property with $d(\mathcal{P})=r \geq 1$, then the following statements hold:
\begin{itemize}
\item[\upshape (a)] For each hypergraph $H$ there is a $(\chi^\ell,\mathcal{P})$-critical induced subhypergraph $G$ such that $\chi^\ell(G:\mathcal{P})=\chi^\ell(H:\mathcal{P}).$
\item[\upshape (b)] If $H$ is a $(\chi^\ell, \mathcal{P})$-critical hypergraph with $\chi^\ell(H:\mathcal{P})=k$, then $\delta(H) \geq r(k-1)$. Moreover, if $U= \{v \in V(H) ~|~ d_H(v) = r(k-1) \}$ is non-empty, then each block $B$ of $H(U)$ is a brick, or $B \in \mathcal{F}(\mathcal{P})$ and $B$ is $r$-regular, or $B \in \mathcal{P}$ and $\Delta(B) \leq r$.
\item[\upshape (c)] For each hypergraph $H$ it holds $\chi^\ell(H:\mathcal{P}) \leq \frac{\Delta(H)}{r} + 1$.
\end{itemize}
\end{lemma}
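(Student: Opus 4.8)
The plan is to prove the three statements of Lemma~\ref{lemma_brooks} in order, leveraging Theorem~\ref{theorem_main-result} together with the basic monotonicity facts about $\chi^\ell(\cdot:\mathcal{P})$ established just before the lemma. For part (a), I would start from an arbitrary hypergraph $H$ and choose an induced subhypergraph $G \subseteq H$ that is minimal (with respect to order, say) subject to $\chi^\ell(G:\mathcal{P}) = \chi^\ell(H:\mathcal{P})$. Such a $G$ exists since $H$ itself is a candidate. Minimality of $G$ means that every proper induced subhypergraph $G'$ of $G$ satisfies $\chi^\ell(G':\mathcal{P}) < \chi^\ell(G:\mathcal{P})$, which is exactly the definition of $(\chi^\ell,\mathcal{P})$-critical. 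Thus $G$ is the desired subhypergraph.

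For part (b), the key idea is to manufacture a $(\mathcal{P},L)$-critical instance out of a $(\chi^\ell,\mathcal{P})$-critical hypergraph $H$ with $\chi^\ell(H:\mathcal{P})=k$, so that Theorem~\ref{theorem_main-result} can be applied. By definition of the $\mathcal{P}$-list-chromatic number, there is a list-assignment $L$ with $|L(v)| \geq k-1$ for all $v$ for which $H$ is not $(\mathcal{P},L)$-colorable; I would take such an $L$ and moreover assume $|L(v)| = k-1$ for every $v$ (shrinking the lists if necessary only makes colorability harder). I then need to check that $H$ is genuinely $(\mathcal{P},L)$-critical, i.e.\ that $H-v$ is $(\mathcal{P},L)$-colorable for each $v$: this follows because $\chi^\ell(H-v:\mathcal{P}) = k-1$ by $(\chi^\ell,\mathcal{P})$-criticality, so $H-v$ is $(\mathcal{P},L')$-colorable for every $(k-1)$-list-assignment $L'$, in particular for the restriction of $L$ to $V(H-v)$. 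With $H$ now $(\mathcal{P},L)$-critical, Proposition~\ref{prop_pl-vertex-crit}(a) gives $d_H(v) \geq r|L(v)| = r(k-1)$ for all $v$, hence $\delta(H) \geq r(k-1)$. The low-vertex set $V(H,\mathcal{P},L)$ consists exactly of the vertices with $d_H(v) = r(k-1)$, which is precisely the set $U$ in the statement (since $|L(v)|=k-1$ uniformly); applying Theorem~\ref{theorem_main-result} to $F = H(V(H,\mathcal{P},L)) = H(U)$ yields the block classification.

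For part (c), I would use part (a) to pass to a $(\chi^\ell,\mathcal{P})$-critical induced subhypergraph $G$ with $\chi^\ell(G:\mathcal{P}) = \chi^\ell(H:\mathcal{P}) =: k$, and note that $\Delta(G) \leq \Delta(H)$ since $G$ is induced. Applying part (b) to $G$ gives $\delta(G) \geq r(k-1)$, so $\Delta(H) \geq \Delta(G) \geq \delta(G) \geq r(k-1)$, which rearranges to $k \leq \frac{\Delta(H)}{r} + 1$, as desired.

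The main obstacle I anticipate is in part (b): the care needed to select the list-assignment $L$ so that it simultaneously witnesses non-$(\mathcal{P},L)$-colorability of $H$, has \emph{exactly} $k-1$ colors in every list (so that $U$ coincides with the low-vertex set $V(H,\mathcal{P},L)$ and the hypothesis of Theorem~\ref{theorem_main-result} matches verbatim), and still makes every $H-v$ colorable. One must verify that trimming each list down to size exactly $k-1$ preserves the non-colorability of $H$ — which holds because a coloring using smaller lists is also a coloring using the original larger lists — while the criticality of $H$ guarantees the vertex-deleted subhypergraphs remain colorable. Getting this bookkeeping right, and confirming that $F$ is non-empty precisely when $U$ is non-empty so that the conditional clause of the lemma aligns with the hypothesis of the main theorem, is where the argument requires genuine attention rather than routine manipulation.
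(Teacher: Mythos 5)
Your proposal is correct and follows essentially the same route as the paper: part (a) by taking an order-minimal induced subhypergraph preserving $\chi^\ell(\cdot:\mathcal{P})$, part (b) by exhibiting a list-assignment $L$ with $|L(v)|=k-1$ witnessing that $H$ is $(\mathcal{P},L)$-critical and then invoking Proposition~\ref{prop_pl-vertex-crit}(a) and Theorem~\ref{theorem_main-result}, and part (c) by combining (a) and (b). Your write-up is in fact slightly more careful than the paper's in justifying why the lists can be trimmed to size exactly $k-1$ and why each $H-v$ remains colorable, but the argument is the same.
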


\begin{proof}
We can choose an induced subhypergraph $G$ of $H$ with $\chi^\ell(G:\mathcal{P}) = \chi^\ell(H:\mathcal{P})$ whose order is minimum; this hypergraph clearly fulfills statement (a). To prove (b), let $H$ be a $(\chi^\ell, \mathcal{P})$-critical hypergraph with $\chi^\ell(H:\mathcal{P})=k$ and let $U=\{v \in V(H) ~ | ~ d_H(v) = r(k-1)\}.$ Then, there exists a list-assignment $L$ of $H$ with $|L(v)|=k-1$ for all $v \in V(H)$ such that $H$ is not $(\mathcal{P},L)$-colorable, but $H-v$ is $(\mathcal{P},L)$-colorable for each $v \in V(H)$. As a consequence, $H$ is $(\mathcal{P},L)$-critical and, by Proposition~\ref{prop_pl-vertex-crit}(a), it holds $\delta(H) \geq r(k-1)$ and $U=V(H,\mathcal{P},L).$
Applying Theorem~\ref{theorem_main-result} then leads to each block $B$ of $G(U)$ having the structure that is required in (b).

For the proof of (c), let $H$ be an arbitrary hypergraph with $\chi^\ell(H:\mathcal{P})=k$. By (a), $H$ contains a $(\chi^\ell,\mathcal{P})$-critical induced subhypergraph $G$ such that $\chi^\ell(G:\mathcal{P})=\chi^\ell(H:\mathcal{P}).$ By (b), $G$ has minimum degree at least $r(k-1)$ and we conclude $\Delta(H) \geq \Delta(G) \geq \delta(G) \geq r(k-1)$ and, hence, $\chi^\ell(H:\mathcal{P}) \leq \frac{\Delta(H)}{r} + 1$.
\end{proof}

We say that a hypergraph property $\mathcal{P}$ is \textbf{additive} if $\mathcal{P}$ is closed under vertex disjoint unions. This means that a non-empty hypergraph $H$ is in $\mathcal{P}$ if and only if each component of $H$ is in $\mathcal{P}$. If we also require $\mathcal{P}$ to be smooth, then each hypergraph $H$ from $\mathcal{F}(\mathcal{P})$ is connected and it holds $d(\mathcal{P}) \geq 1$ (since $K_0,K_1 \in \mathcal{P}$ by Proposition~\ref{prop_smoothprop}(a)).

Recall that $\mathcal{O}$ is the class of edgeless hypergraphs. The property $\mathcal{O}$ obviously is non-trivial, hereditary and additive, and $\mathcal{O} \subseteq \mathcal{P}$ holds for each property $\mathcal{P}$ that is smooth and additive (by Proposition~\ref{prop_smoothprop}(a)). As a consequence, each hypergraph $H$ satisfies $\chi^\ell(H:\mathcal{P}) \leq \chi^\ell(H:\mathcal{O}) = \chi^\ell(H)$ for any smooth and additive hypergraph property $\mathcal{P}$. With the help of Lemma~\ref{lemma_brooks} we are able to give a Brooks-type result for smooth and additive hypergraph properties. This theorem was proven for simple graphs in \cite{BoDrMi95}.

\begin{theorem}\label{theorem_brooks}
Let $\mathcal{P}$ be a non-trivial, hereditary and additive hypergraph property with $d(\mathcal{P})=r$ and let $H$ be a connected hypergraph. Then, $$\chi^\ell(H:\mathcal{P}) \leq \left \lceil \frac{\Delta(H)}{r} \right \rceil + 1,$$
and if equality holds, then $H=tK_{(kr+t)/t}$ for some integers $t \geq 1, k \geq 0$, or $H$ is a $tC_n$ for $t=r, n \geq 3$ odd and $\chi^\ell(H:\mathcal{P})=3$, or $H$ is $r$-regular and $H \in \mathcal{F}(\mathcal{P})$.
\end{theorem}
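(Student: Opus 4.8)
The plan is to derive the bound as a direct consequence of Lemma~\ref{lemma_brooks}(c) and then to analyze the equality case via the structural information in Lemma~\ref{lemma_brooks}(b). Since $\mathcal{P}$ is non-trivial, hereditary and additive, Proposition~\ref{prop_smoothprop}(a) gives $r=d(\mathcal{P})\geq 1$, so Lemma~\ref{lemma_brooks} applies. First I would note that Lemma~\ref{lemma_brooks}(c) yields $\chi^\ell(H:\mathcal{P})\leq \Delta(H)/r + 1$; since $\chi^\ell(H:\mathcal{P})$ is an integer, this immediately sharpens to $\chi^\ell(H:\mathcal{P})\leq \lceil \Delta(H)/r\rceil + 1$, giving the inequality. (A small subtlety: if $\Delta(H)/r$ is an integer the ceiling changes nothing, while if it is not, rounding up is exactly what the integrality of the left side permits.)

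For the equality case, set $k=\chi^\ell(H:\mathcal{P}) = \lceil \Delta(H)/r\rceil + 1$. By Lemma~\ref{lemma_brooks}(a), $H$ contains a $(\chi^\ell,\mathcal{P})$-critical induced subhypergraph $G$ with $\chi^\ell(G:\mathcal{P})=k$. The chain of inequalities in the proof of Lemma~\ref{lemma_brooks}(c) reads $\Delta(H)\geq\Delta(G)\geq\delta(G)\geq r(k-1)=r\lceil\Delta(H)/r\rceil$, and because $\Delta(H)\leq r\lceil\Delta(H)/r\rceil$ always holds, the equality $k=\lceil\Delta(H)/r\rceil+1$ forces $r\mid\Delta(H)$ and collapses the whole chain to equalities: $\Delta(H)=\Delta(G)=\delta(G)=r(k-1)$. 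Thus $G$ is $r(k-1)$-regular, so the set $U$ in Lemma~\ref{lemma_brooks}(b) equals $V(G)$, meaning $G(U)=G$ itself; moreover $G=H$ by connectedness, since a regular induced subhypergraph realizing the maximum degree of a connected $H$ must exhaust all of $H$ (any boundary vertex of a proper induced subhypergraph would have strictly smaller degree in $G$ than in $H$). Applying Lemma~\ref{lemma_brooks}(b) to $G=H$ then tells us that every block $B$ of $H$ is a brick, or $B\in\mathcal{F}(\mathcal{P})$ and $r$-regular, or $B\in\mathcal{P}$ with $\Delta(B)\leq r$.

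The remaining work is to translate this block-level classification into the three global alternatives in the theorem statement, and I expect this to be the main obstacle. The strategy is to show that under the regularity constraint $H$ is $r(k-1)$-regular together with additivity and connectedness, $H$ must consist of a single block. Indeed, if $H$ had a separating vertex it would lie in two blocks, and one would have to check that the prescribed degree $r(k-1)$ cannot be consistently distributed across the brick/regular/$\Delta\leq r$ block types at a cut vertex unless $k=2$; the borderline case $k=2$ (where $r(k-1)=r$ and blocks of type $\mathcal{P}$ with $\Delta(B)\leq r$ can occur) should correspond precisely to the $tK_{(kr+t)/t}$ family with $k=1$ after reindexing, or to small complete pieces. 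Once $H$ is a single block of one of the three types, I would identify the cases: a brick $tC_n$ with $n$ odd forces each vertex to have degree $2t=r(k-1)$, and since a proper coloring argument gives $\chi^\ell(tC_n:\mathcal{P})=3$ one reads off $t=r$ and $k=3$; a brick $tK_m$ is $t(m-1)$-regular, and matching $t(m-1)=r(k-1)$ yields the parametrization $H=tK_{(kr+t)/t}$ after verifying $\chi^\ell(tK_m:\mathcal{P})$ attains the bound; and the remaining alternative is $H\in\mathcal{F}(\mathcal{P})$ and $r$-regular. The delicate points will be confirming the exact list-chromatic values of $tC_n$ and $tK_m$ with respect to $\mathcal{P}$ (to pin down which bricks actually achieve equality rather than merely satisfy the regularity), ruling out genuinely multi-block configurations, and checking that the arithmetic $(kr+t)/t$ is an integer exactly when the brick is a legitimate complete hypergraph in the family.
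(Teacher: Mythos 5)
Your derivation of the bound and your reduction to the block classification coincide with the paper's proof: Lemma~\ref{lemma_brooks}(c) plus integrality of $\chi^\ell(H:\mathcal{P})$ gives the inequality, and in the equality case the chain $\Delta(H)\ge\Delta(G)\ge\delta(G)\ge r(k-1)$ together with $\Delta(H)\le r\lceil\Delta(H)/r\rceil$ forces $r\mid\Delta(H)$, collapses everything to equalities, yields $G=H$ regular of degree $\Delta(H)$ by connectedness, and then Lemma~\ref{lemma_brooks}(b) classifies the blocks. Up to that point the argument is correct and identical in substance to the paper's (modulo your reindexing $k\mapsto k+1$).

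The gap is that the translation of the block classification into the three stated alternatives --- which you yourself flag as ``the main obstacle'' --- is left as a plan, and the plan contains one wrong turn. The single-block reduction does go through and you should write it out (the paper only says ``clearly''): in an end-block $B$ with separating vertex $x$, every non-separating vertex has its full degree $\Delta(H)$ inside $B$ while $x$ does not; this is incompatible with $B$ being regular, which covers the bricks and the $r$-regular $\mathcal{F}(\mathcal{P})$-blocks, and a block with $\Delta(B)\le r$ forces $\Delta(H)\le r$, hence $\chi^\ell(H:\mathcal{P})\le 2$, a case one settles directly because a $(\chi^\ell,\mathcal{P})$-critical hypergraph with $\chi^\ell(H:\mathcal{P})=2$ lies in $\mathcal{F}(\mathcal{P})$ and is then $r$-regular. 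More seriously, your assertion that ``a proper coloring argument gives $\chi^\ell(tC_n:\mathcal{P})=3$'' is false in general: additivity only gives $\mathcal{O}\subseteq\mathcal{P}$ and hence $\chi^\ell(tC_n:\mathcal{P})\le\chi^\ell(tC_n)\le 3$, and the value can be $2$. The paper therefore splits the odd-cycle brick into two subcases: if $\chi^\ell(H:\mathcal{P})=3$ then $2t=2r$, so $t=r$, giving the second alternative; if $\chi^\ell(H:\mathcal{P})=2$ then criticality forces $H\in\mathcal{F}(\mathcal{P})$ with $H$ being $2t=r$-regular, which lands in the \emph{third} alternative. Your version would miss that subcase. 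Finally, your concern about ``verifying that $\chi^\ell(tK_m:\mathcal{P})$ attains the bound'' is a non-issue: the theorem asserts only a necessary condition for equality, so no attainment check is required, and none appears in the paper.
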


\begin{proof}
Let $H$ be an arbitrary connected hypergraph. If $\Delta(H)$ is not divisible by $r$, then the statement follows directly from Lemma~\ref{lemma_brooks}(c) (in particular, equality cannot hold). Thus, we may assume $\Delta(H) = kr$ for some integer $k \geq 0$ and so $\chi^\ell(H:\mathcal{P}) \leq k + 1$ (by Lemma~\ref{lemma_brooks}(c)). If $\chi^\ell(H:\mathcal{P}) \leq k$, there is nothing left to show. Suppose $\chi^\ell(H:\mathcal{P}) = k + 1$. Then, by Lemma~\ref{lemma_brooks}(a),(b), $H$ contains a $(\chi^\ell,\mathcal{P})$-critical subhypergraph $G$ satisfying $\chi^\ell(G:\mathcal{P})=k + 1$ and $\delta(G) \geq kr$. As $H$ is connected and as $\Delta(G) \leq \Delta(H)=kr$, this implies that $H=G$ and, hence, $H$ is $kr$-regular and $(\chi^\ell,\mathcal{P})$-critical. Thus, $H=H(U)$, whereas $U=\{v \in V(H) ~ | ~ d_H(v)=rk \}$ and, by Lemma~\ref{lemma_brooks}(b), each block $B$ of $H$ is a brick, or $B \in \mathcal{F}(\mathcal{P})$ and $B$ is $r$-regular, or $B \in \mathcal{P}$ and $\Delta(B) \leq r$. As $H$ itself is $kr$-regular, this clearly implies that $H$ is a block.

If $H=tK_n$ with $t,n \geq 1$, then $d_H(v)=t(n-1)=kr$ and thus $n=\frac{kr+t}{t}$. Hence, we are done. If $H=tC_n$ for some $t\geq 1$ and $n \geq 3$ odd, we have $kr=2t \geq 2$. In the case $k=1$, it follows $\chi^\ell(H:\mathcal{P})=2$ and $r=2t$. As $H$ is $(\chi^\ell, \mathcal{P})$-critical, this implies that $H$ is in $\mathcal{F}(\mathcal{P})$ and $H$ is $r$-regular. For $k\geq 2$, we argue as follows. Since $\chi^\ell(H:\mathcal{P}) \leq \chi^\ell(H) \leq 3$ and as $\chi^\ell(H:\mathcal{P})=k + 1$, it must hold $\chi^\ell(H:\mathcal{P})=3$, $k=2$ and, thus, $r=t$. Hence, we are done.

If $H \in \mathcal{F}(\mathcal{P})$ and $H$ is $r$-regular, then $k=1$ (as $H$ is $kr$-regular), and we are done, too. Finally, if $H \in \mathcal{P}$ and $\Delta(H) \leq r$, then $\chi^\ell(H:\mathcal{P})=1$, but $k=1$, contradicting the premise. This completes the proof.
\end{proof}

In the previously mentioned paper by Erd\H os, Rubin and Taylor \cite{ErdRubTay79}, a degree version of Brooks' Theorem is proven. To conclude this section, we present a related result to theirs.

\begin{theorem}
Let $\mathcal{P}$ be a non-trivial, hereditary and additive hypergraph property with $d(\mathcal{P})=r$, and let $H$ be a connected hypergraph. Moreover, let $L$ be a list-assignment for $H$ such that $r|L(v)| \geq d_H(v)$ for all $v \in V(H)$. Then, $H$ is $(\mathcal{P},L)$-colorable, unless each block $B$ of $H$ is a brick, or $B \in \mathcal{F}(\mathcal{P})$ is $r$-regular, or $B \in \mathcal{P}$ and $\Delta(B) \leq r$.
\end{theorem}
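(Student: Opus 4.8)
The plan is to prove the contrapositive: assuming $H$ is \emph{not} $(\mathcal{P},L)$-colorable, I will show that every block of $H$ has one of the three listed forms, by reducing to a critical subhypergraph on which Theorem~\ref{theorem_main-result} applies directly. Note first that $\mathcal{P}$ is smooth (it is non-trivial and hereditary), so Proposition~\ref{prop_pl-vertex-crit} is available; together with additivity this also gives $r = d(\mathcal{P}) \geq 1$, as remarked after Lemma~\ref{lemma_brooks}.

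First I would pass to a minimal non-colorable induced subhypergraph. Since $H$ is not $(\mathcal{P},L)$-colorable, choose an induced subhypergraph $H'$ of $H$ of minimum order that is not $(\mathcal{P},L|_{V(H')})$-colorable. By minimality $H'-v$ is colorable for every $v \in V(H')$, so $H'$ is $(\mathcal{P},L)$-critical. Applying Proposition~\ref{prop_pl-vertex-crit}(a) to $H'$ yields $d_{H'}(v) \geq r|L(v)|$ for all $v \in V(H')$. On the other hand, $H'$ is an induced subhypergraph of $H$, so $E_{H'}(v) \subseteq E_H(v)$ and hence $d_{H'}(v) \leq d_H(v)$; combining this with the hypothesis $d_H(v) \leq r|L(v)|$ forces the chain of equalities $r|L(v)| = d_{H'}(v) = d_H(v)$ for every $v \in V(H')$.

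The equality $d_{H'}(v) = d_H(v)$ is the crux of the argument. Since $E_{H'}(v) \subseteq E_H(v)$ and the two sets have the same cardinality, they coincide; because $H'$ is induced, this means every edge of $H$ incident with $v$ is contained in $V(H')$. Consequently no edge of $H$ joins $V(H')$ to $V(H) \setminus V(H')$, so $V(H')$ would be a union of components of $H$. As $H$ is connected and $V(H')$ is non-empty, this is only possible if $V(H') = V(H)$, whence $H' = H$. Therefore $H$ itself is $(\mathcal{P},L)$-critical, and the surviving equality $d_H(v) = r|L(v)|$ shows that \emph{every} vertex is a low vertex, i.e. $V(H,\mathcal{P},L) = V(H)$.

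It then remains to invoke the main theorem. Shrinking $H$ to its entire vertex set returns $H$ (each edge $e$ satisfies $|i_H(e) \cap V(H)| = |i_H(e)| \geq 2$), so $F = H(V(H,\mathcal{P},L)) = H$, which is non-empty. Theorem~\ref{theorem_main-result} now applies verbatim and tells us that every block $B$ of $F = H$ is a brick, or lies in $\mathcal{F}(\mathcal{P})$ and is $r$-regular, or lies in $\mathcal{P}$ with $\Delta(B) \leq r$ --- exactly the asserted trichotomy. The one step requiring genuine care is the connectivity argument that upgrades $H'$ to all of $H$; everything else is a direct citation of Proposition~\ref{prop_pl-vertex-crit} and Theorem~\ref{theorem_main-result}. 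The real mathematical weight sits in Theorem~\ref{theorem_main-result}, which this degree version merely packages as a corollary.
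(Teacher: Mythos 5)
Your proposal is correct and follows essentially the same route as the paper: pass to a $(\mathcal{P},L)$-critical (induced) subhypergraph, use Proposition~\ref{prop_pl-vertex-crit}(a) together with the degree hypothesis to force $d_{H'}(v)=d_H(v)=r|L(v)|$, conclude $H'=H$ by connectivity, and then apply Theorem~\ref{theorem_main-result} with $V(H,\mathcal{P},L)=V(H)$. Your write-up merely spells out in more detail the steps the paper leaves implicit (why $d_{H'}(v)=d_H(v)$ forces $H'=H$, and why $H(V(H))=H$).
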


\begin{proof}
If $H$ is $(\mathcal{P},L)$-colorable, there is nothing left to show. Suppose that $H$ is not $(\mathcal{P},L)$-colorable. Then, there is a $(\mathcal{P},L)$-critical subhypergraph $G$ of $H$. By Proposition~\ref{prop_pl-vertex-crit}(a), it holds $d_G(v) \geq r|L(v)|$ for all $v \in V(G)$ and, thus, $d_G(v)=d_H(v)=r|L(v)|$ for all $v \in V(G)$. As $H$ is connected, this implies that $G=H$, i.e. $H$ is $(\mathcal{P},L)$-critical. Moreover, it follows that $d_H(v)=r|L(v)|$ for all $v \in V(H)$ and so $V(H)=V(H,\mathcal{P},L)$. Applying Theorem~\ref{theorem_main-result} completes the proof.
\end{proof}

\subsection{Proof of Theorem~\ref{theorem_main-result}}
In order to prove Theorem~\ref{theorem_main-result} we need to consider hypergraph partitions with specific constraints on the degeneracy. Let $H$ be an arbitrary hypergraph. A function $f:V(H) \to \nato^p$ is called a \DF{vector function} of $H$. By $f_i$ we name the $i$th coordinate of $f$, i.e., $f=(f_1,f_2, \ldots, f_p)$. The set of all vector functions of $H$ with $p$ coordinates is denoted by $\cV_p(H)$. For $f\in \cV_p(H)$, an \DF{$f$-partition} of $H$ is a $p$-partiton $(H_1,H_2, \ldots, H_p)$ of $H$ such that $H_i$ is strictly $f_i$-degenerate for all $i\in \{1,2, \ldots, p\}$. If the hypergraph $H$ admits an $f$-partition, then $H$ is said to be \DF{$f$-partitionable}. Schweser and Stiebitz \cite{SStieb} examined, under which conditions a hypergraph $H$ is $f$-partitionable. They used the following definitions.

Let $H$ be a connected hypergraph and let $f \in \mathcal{V}_p(H)$ be a vector-function for some $p \geq 1$. We say that $H$ is $f$\textbf{-hard}, or, equivalently, that $(H,f)$ is a \textbf{hard pair}, if one of the following conditions hold.

\begin{itemize}
\item[(1)] $H$ is a block and there exists an index $j \in \{1,2,\ldots,p\}$ such that
$$f_i(v)=
\begin{cases}
d_H(v) & \text{if i=j,}\\
0 & \text{otherwise}
\end{cases}$$
for all $i \in \{1,2,\ldots,p\}$ and for each $v \in V(H)$. In this case, we say that $H$ is a \textbf{monoblock} or a block of type \textbf{(M)}.
\item[(2)] $H=tK_n$ for some $t \geq 1, n \geq 3$ and there are integers $n_1,n_2,\ldots,n_p \geq 0$ with at least two $n_i$ different from zero such that $n_1 + n_2 + \ldots + n_p=n-1$ and that
$$f(v)=(tn_1,tn_2,\ldots,tn_p)$$
for all $v \in V(H)$. In this case, we say that $H$ is a block of type \textbf{(K)}.
\item[(3)] $H=tC_n$ with $t \geq 1$ and $n \geq 5$ odd and there are two indices $k \neq \ell$ from the set $\{1,2,\ldots,p\}$ such that
$$f_i(v)=
\begin{cases}
t & \text{if } i \in \{k,\ell\}, \\
0 & \text{otherwise}
\end{cases}
$$ for all $i \in \{1,2,\ldots,p\}$ and for each $v \in V(H)$. In this case, we say that $H$ is a block of type \textbf{(C)}.
\item[(4)] There are two hard pairs $(H^1,f^1)$ and $(H^2,f^2)$ with $f^1 \in \mathcal{V}_p(H^1)$ and $f^2 \in \mathcal{V}_p(H^2)$ such that $H$ is obtained from $H^1$ and $H^2$ by merging two vertices $v^1 \in V(H_1)$ and $v^ 2 \in V(H_2)$ to a new vertex $v^*$. Furthermore, it holds
$$f(v)=
\begin{cases}
f^1(v) & \text{if } v \in V(H_1) \sm \{v^1\}, \\
f^2(v) & \text{if } v \in V(H_2) \sm \{v^2\}, \\
f^1(v^1) + f^2(v^2) & \text{if } v=v^*
\end{cases}$$
for all $v \in V(H)$.
\end{itemize}

The next theorem was proven by Schweser and Stiebitz \cite{SStieb} in 2018, it characterizes $f$-partitionable hypergraphs $H$ fulfilling the condition $f_1(v) + f_2(v) + \ldots + f_p(v) \geq d_H(v)$ for all $v \in V(H)$.

\begin{theorem}
\label{Theorem:Hauptsatz}
Let $H$ be a connected hypergraph and let $f\in \cV_p(H)$ be a vector function with $p\geq 1$ such that $f_1(v)+f_2(v)+\cdots +f_p(v)\geq d_H(v)$ for all $v\in V(H)$. Then $H$ is not $f$-partitionable if and only if $(H,f)$ is a hard pair.
\end{theorem}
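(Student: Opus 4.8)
The plan is to prove the two implications of the equivalence separately. The converse (a hard pair is never $f$-partitionable) is a clean induction on the recursive definition, while the forward implication (not $f$-partitionable $\Rightarrow$ hard pair) carries the real weight.

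\textbf{Hard pair $\Rightarrow$ not $f$-partitionable.} I would induct on the number of applications of the merging clause~(4). For the three atomic types the argument is direct. In a monoblock~(M) every coordinate $i\neq j$ satisfies $f_i\equiv 0$, so each such colour class must be empty; thus all vertices get colour $j$, and the whole block is a nonempty subhypergraph of its own class with no vertex $v$ obeying $d_H(v)<f_j(v)=d_H(v)$, so it is not strictly $f_j$-degenerate. For type~(K), any colour class of $tK_n$ of size $s_i$ that is strictly $tn_i$-degenerate satisfies $s_i\le n_i$ (peel vertices off one by one; each induced $tK_s$ needs a vertex of degree $t(s-1)<tn_i$), whence $n=\sum_i s_i\le\sum_i n_i=n-1$, a contradiction. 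For type~(C) the two admissible coordinates carry budget $t$, and a strictly $t$-degenerate subhypergraph of $tC_n$ cannot contain even a single $t$-fold edge, so each class would be independent and we would obtain a proper $2$-colouring of an odd cycle. For the inductive step I would use a restriction lemma: given an $f$-partition $\varphi$ of the merge $H$ with $i_0=\varphi(v^*)$, an elimination order witnessing strict $f_{i_0}$-degeneracy of $\varphi^{-1}(i_0)$ splits the back-degree of $v^*$ as $\beta_1+\beta_2<f^1_{i_0}(v^1)+f^2_{i_0}(v^2)$; for the side $s$ with $\beta_s<f^s_{i_0}(v^s)$, restricting $\varphi$ and this order to that side gives an $f^s$-partition of $H^s$. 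Hence some side is partitionable, so if both $(H^1,f^1)$ and $(H^2,f^2)$ are hard pairs the merge is not partitionable.

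\textbf{Reduction to the tight case.} For the forward implication I induct on $|V(H)|$ and first handle slack by a greedy lemma: if $\sum_i f_i(v_0)>d_H(v_0)$ for some $v_0$, then $H$ is $f$-partitionable. Fix an order $v_1,\dots,v_m=v_0$ in which each $v_j$ with $j<m$ is incident with an edge meeting $\{v_{j+1},\dots,v_m\}$ (a connectivity order, available since $H$ is connected), and colour greedily. Writing $B_j$ for the number of edges $e\ni v_j$ with $i_H(e)\subseteq\{v_1,\dots,v_j\}$ and $B_j^i$ for those among them whose other vertices are all coloured $i$, we have $\sum_i B_j^i\le B_j\le d_H(v_j)-1<\sum_i f_i(v_j)$ for $j<m$, and $B_m=d_H(v_0)<\sum_i f_i(v_0)$ by slack; so some colour $i$ has $B_j^i<f_i(v_j)$, into which $v_j$ may be placed, and the colouring order certifies that each class is strictly degenerate. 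Thus I may assume $\sum_i f_i(v)=d_H(v)$ for all $v$.

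\textbf{Reduction to a single block.} Assuming tightness, I split on the block structure. If $H$ has a separating vertex I peel off a leaf block $B$ meeting the rest in a single cut vertex $u$ and write $H$ as the merge at $u$ of $B$ and $H'=H-(V(B)\setminus\{u\})$. Tightness lets me split $f(u)=g+(f(u)-g)$ with $\sum_i g_i=d_B(u)$, and a gluing lemma---interleave the elimination orders of the two sides and place the merged vertex at the combined moment, where its back-degrees add---shows that $B$ and $H'$ being partitionable with a common colour at $u$ would make $H$ partitionable. Because $H$ is not partitionable, no common-colour gluing succeeds for any split; a monotonicity argument (enlarging a coordinate of the budget only enlarges the set of colours attainable at $u$), together with the fact that the budgets on $V(B)\setminus\{u\}$ are already fixed by $f$, then yields a split under which neither $B$ nor $H'$ is partitionable. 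As $B$ is a single block, the induction hypothesis forces it to be an atomic hard type and $H'$ to be a hard pair, so $H$ is of type~(4). This leaves the case that $H$ is itself a block.

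\textbf{The single-block case (main obstacle).} A tight, non-$f$-partitionable block must be a monoblock, a $tK_n$ with a type-(K) budget, or a $tC_n$ with a type-(C) budget, and this is where I expect the real difficulty. I would first show that any genuine hyperedge resolves the issue: if a block has an edge of size $\ge3$ (or is a single edge) and $f$ is not concentrated on one coordinate, that edge can be kept non-monochromatic and the remaining freedom used to finish an $f$-partition, so such a block is a monoblock or is partitionable; this confines the nontrivial analysis to pure-graph blocks. There the claim is precisely a Gallai--Brooks theorem for variable degeneracy: with $f$ not concentrated (so $H$ is no monoblock), I would run the greedy colouring above from a carefully chosen root and ordering and argue that the single residual deficiency at the root can always be repaired by a Kempe-type exchange along the $2$-connected structure, except when $H$ is a complete multigraph $tK_n$ or an odd multicycle $tC_n$, in which case the forced budget is exactly the type-(K) respectively type-(C) pattern. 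Carrying out these exchanges on multigraphs while tracking the degeneracy budgets, and checking that the dichotomy is exhaustive, is the step I expect to be hardest.
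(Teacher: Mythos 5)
First, a point of comparison: the paper does not prove Theorem~\ref{Theorem:Hauptsatz} at all — it is quoted as a result of Schweser and Stiebitz \cite{SStieb} and used as a black box — so there is no in-paper proof to measure your attempt against. Judged on its own terms, your proposal handles the routine parts correctly: the verification that hard pairs of types (M), (K) and (C) are not partitionable, the restriction lemma for merges (splitting the back-degree of $v^*$ at its elimination moment), and the greedy argument that disposes of any vertex with slack are all sound and are essentially the standard arguments for results of this kind.

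The two steps that actually carry the theorem are, however, not proved. (i) In the cut-vertex reduction you must exhibit \emph{one} split $f(u)=g+(f(u)-g)$ under which both the leaf block $B$ and the remainder $H'$ fail to be partitionable. Your ``monotonicity argument'' does not deliver this: for a given split both pieces may be partitionable yet force different colours (or incompatible residual degrees) on $u$, so ``$H$ is not partitionable'' does not translate into ``for some split both pieces fail'' without an explicit analysis of the set of colour/deficiency patterns realizable at $u$ by each piece. (ii) The $2$-connected case is asserted rather than proved. The claims that a hyperedge of size at least $3$ always ``resolves the issue,'' and that for multigraph blocks a single residual deficiency at the root can be repaired by a Kempe-type exchange unless $H=tK_n$ or $H=tC_n$ with $n$ odd, are precisely the content of the theorem. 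Moreover, Kempe exchanges are a doubtful tool here: swapping parts of two colour classes need not preserve strict $f_i$-degeneracy of either class, since degeneracy is a global elimination-order condition rather than a local one like properness. The known proofs of such variable-degeneracy Brooks/Gallai theorems instead locate special configurations inside a $2$-connected non-brick (an induced even cycle with at most one chord, or two nonadjacent vertices with a common neighbour whose deletion leaves the block connected) and build a tailored vertex ordering from them. As it stands, your proposal is a correct high-level outline whose hardest components are missing.
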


Note that if $(H,f)$ is of type (C) or (K), then, in particular, $H$ is a brick. We will use the above theorem in order to prove our main result.

\begin{proofof}[Theorem~\ref{theorem_main-result}]
Let $B$ be an arbitrary block of $F=H(V(H,\mathcal{P},L))$. Since $H$ is $(\mathcal{P},L)$-critical, there is a $(\mathcal{P},L)$-coloring $\varphi$ of $H - V(B)$ with a set $C$ of $p$ colors. By renaming the colors we may assume $C=\{1,2,\ldots,p\}$. Let $H_i=H[\varphi^{-1}(i)]$ for each $i \in \{1,2,\ldots,p\}$. Then, for $v \in V(B)$, we define the vector function $f: V(B) \to \mathbb{N}_0^p$ as follows. For each $v \in V(B)$, let $f_i(v)= \max\{0, r-d_{H_i+v}(v)\}$ if $i \in L(v)$, and $f_i(v)=0$, otherwise.

We claim that $B$ is not $f$-partitionable. Assume, to the contrary, that $B$ admits an $f$-partition $(H_1',H_2',\ldots,H_p')$. Then, for $i \in \{1,2,\ldots,p\}$ let $\tilde{H_i}=H[V(H_i) \cup V(H_i')]$. Obviously, $(\tilde{H_1}, \tilde{H_2},\ldots, \tilde{H_p})$ is a partition of $H$. Note that $v \in V(\tilde{H_i})$ implies that $i \in L(v)$ (since $f_i(v) \geq 1$ for $v \in V(H_i')$). If $\tilde{H_i}\in \mathcal{P}$ for all $i \in \{1,2,\ldots,p\}$, it follows that $H$ is $(\mathcal{P},L)$-colorable, a contradiction. As a consequence, there is an $i \in \{1,2,\ldots,p\}$ such that $\tilde{H_i} \not \in \mathcal{P}$. By Proposition~\ref{prop_smoothprop}(c), there exists an induced subhypergraph $G$ of $\tilde{H_i}$ such that  $G \in \mathcal{F}(\mathcal{P})$ and, thus, $\delta(G) \geq d(\mathcal{P})=r$. Since $H_i$ is in $\mathcal{P}$ but $G$ is not, $G$ contains a vertex of $H_i'$. Thus, the hypergraph $G'=H_i'[V(G) \cap V(H_i')]$ is non-empty. However, since $H_i'$ is strictly $f_i$-degenerate, there is a vertex $v$ in $G'$ such that $d_{G'}(v) < f_i(v) = r - d_{H_i + v} (v)$ and thus $d_G(v) \leq d_{G'}(v) + d_{H_i+v}(v) < r$, a contradiction. Hence, $B$ is not $f$-partitionable.

Since $d_H(v)=r|L(v)|$ for all $v \in V(B)$, we obtain that
\begin{align*}
\sum_{i=1}^p f_i(v) & = \sum_{i \in L(v)}f_i(v)
 \geq  \sum_{i \in L(v)}(r-d_{H_i+v}(v))\\
& =  d_H(v) - \sum_{i \in L(v)} d_{H_i+v}(v) \geq d_B(v)
\end{align*}
for all $v \in V(B)$. Thus, by Theorem~\ref{Theorem:Hauptsatz} and as $B$ is a block, $(B,f)$ is of type (M), (K) or (C). If $(B,f)$ is not of type (M), then $B$ is a brick and we are done. Thus assume that $(B,f)$ is of type (M). Then, there is exactly one index $i$ such that $f_i(v) = d_B(v)$ for all $v \in V(B)$ and $f_j(v) = 0$ for $j \neq i$ from the set $\{1,2,\ldots,p\}$. As a consequence, $d_{H_j+v}(v) \geq r$ for all $j \in L(v) \setminus\{i\}$ and thus, $d_B(v) \leq r$ for all $v \in V(B)$. If $B \in \mathcal{P}$, we have $\Delta(B) \leq r$ and there is nothing left to show. If $B \not \in \mathcal{P}$, then by Proposition~\ref{prop_smoothprop}(c), $B$ contains an induced subhypergraph $B'$ from $\mathcal{F}(\mathcal{P})$. Since $d_B(v) \leq r$ for all $v \in V(B)$ and since $\delta(B') \geq d(\mathcal{P})= r$, it must hold $B=B'$ and $d_B(v) = r$ for all $v \in V(B)$. Consequently, $B \in \mathcal{F}(\mathcal{P})$ and $B$ is $r$-regular. This completes the proof.
\end{proofof}

\section{A Gallai-type bound for the degree sum of critical linear hypergraphs}
The topic of finding lower bounds for the number of edges, respectively the degree sum of critical graphs and hypergraphs with respect to some coloring concept has already been examined extensively in the past. Regarding proper colorings of simple graphs (not hypergraphs), Gallai~\cite{Gal63b} proved that for a $(k+1)$-critical graph $G\neq K_{k+1}$, that is, a graph which has chromatic number $k+1$ but each proper subgraph has chromatic number at most $k$, it holds
$$d(G) \geq k |V(G)| + \frac{k- 2}{k^2 + 2k -2}|V(G)|$$ if $k\geq 3$. For simple hypergraphs, an even stronger bound was proven by Kostochka and Stiebitz \cite{KosStieb03}. Mih\'ok and \v{S}krekovski \cite{MiSkre01} proved a Gallai-type bound for the case of $(\mathcal{P},L)$-critical graphs. In the next section, with the help of Stiebitz and Kostochka's approach, we show that the bound also holds for $(\mathcal{P},L)$-critical locally linear hypergraphs.

Let $\mathcal{P}$ be a smooth additive hypergraph property and let $H$ be a $(\mathcal{P},L)$-critical hypergraph, whereas $L$ is a list-assignment for $H$ with $|L(v)|=k$ for all $v \in V(H)$. Then we say that $H$ is \textbf{locally linear with respect to} $(\mathcal{P},L)$ if $H(V(H,\mathcal{P},L))$ is simple. Furthermore, if $H$ is a $(\chi^\ell,\mathcal{P})$-critical hypergraph with $\chi^\ell(H:\mathcal{P})=k+1$, we say that $H$ is \textbf{locally linear with respect to} $(\chi^\ell,\mathcal{P})$ if $H$ is locally linear with respect to $(\mathcal{P},L)$ for some list-assignment $L$ with $|L(v)|=k$ for all $v \in V(H)$ such that $H$ is $(\mathcal{P},L)$-critical. Note that if $H$ is locally linear with respect to $(\mathcal{P},L)$, then $H$ is locally linear for each list-assignment $L'$ satisfying that $H$ is $(\mathcal{P},L')$-critical and that $|L'(v)|=|L(v)|$ for all $v \in V(H)$, since for the low vertex hypergraphs it clearly holds $V(H,\mathcal{P},L)=V(H,\mathcal{P},L')$. Note that if $H$ is a simple hypergraph, then the shrinking operation may still lead to parallel edges. Since it will be necessary that the low vertex hypergraph is simple, we need to limit ourselves to locally linear hypergraphs. Moreover, it is important to note that if $\mathcal{P}=\mathcal{O}$, then any $(\mathcal{P},L)$-critical hypergraph is locally linear with respect to $(\mathcal{P},L)$ (see \cite{KosStieb03}).

In the following, let $\mathcal{P}$ be a smooth additive hypergraph property with $d(\mathcal{P})=r \geq 1$, let $k \geq 1$ and let $\delta=kr$. Furthermore, let $H$ be a locally linear hypergraph with respect to  $(\chi^\ell,\mathcal{P})$ where $\chi^\ell=k+1$ for some $k \geq 1$. Let $n=|H|$ and let
$$a(\delta,n)=\delta n + \frac{\delta-2}{\delta^2 + 2 \delta - 2}n.$$
Our aim is to prove that $d(H) \geq a(\delta,n)$. Note that the $(\chi^\ell, \mathcal{P})$-critical locally linear hypergraphs for $\chi^\ell(H:\mathcal{P})=2$ (i.e. $k=1$) are exactly the hypergraphs from $\mathcal{F}(\mathcal{P})$ (by Proposition~\ref{prop_smoothprop}(b) and since $H$ being $(\chi^\ell, \mathcal{P})$-critical implies that $H$ is ($\mathcal{P},L)$-critical with $L(v)=\{1\}$ for all $v \in V(H)$). In this case, however, the boundary is not true for many properties. As an example consider the class $D_{r-1}$ of strictly $r$-degenerate hypergraphs. Then it is easy to check that $\mathcal{F}(\mathcal{P})$ contains all $r$-regular connected hypergraphs, and thus, the bound clearly does not hold for $r \geq 3$.

Thus, in the following we will assume $k \geq 2$ and, therefore, $\delta \geq 2$. If $\delta=2$, this implies $r=1$ and $k=2$. Then, $\chi^\ell(H:\mathcal{P})=3$ and, in particular, there is a list assignment $L$ for $H$ with $|L(v)|=2$ for all $v \in V(H)$ such that $H-v$ is $(\mathcal{P},L)$-colorable for all $v \in V(H)$, but $H$ is not. Consequently, $H$ is $(\mathcal{P},L)$-critical, and, by Proposition~\ref{prop_pl-vertex-crit}(a), it holds $d_H(v) \geq r|L(v)|=2$ for all $v \in V(H)$. Thus, as $\delta = 2$, it trivially holds $d(H)\geq 2n = a(2,n)$. Hence, as of now we may assume $\delta \geq 3$.
Lastly, it is important to note that if $H=K_{\delta + 1}$, then clearly $d(H) < a(\delta, n)$ for $\delta \geq 3$ and thus the bound is not true in this case. Therefore, we need to exclude the $K_{\delta + 1}$ from our further considerations.

Instead of proving the bound for $(\chi^\ell, \mathcal{P})$-critical hypergraphs, we prove a slightly stronger result regarding $(\mathcal{P},L)$-critical hypergraphs.

\begin{theorem}\label{theorem_gallai-bound}
Let $\mathcal{P}$ be a smooth additive hypergraph property with $d(\mathcal{P})=r \geq 1$, let $k \geq 2$, and let $\delta=kr \geq 3$. Furthermore, let $H \neq K_{\delta + 1}$ be a locally linear hypergraph with respect to $(\mathcal{P},L)$, whereas $L$ is a list-assignment for $H$ with $|L(v)|=k$ for all $v \in V(H)$. Then, it holds $d(H) \geq a(\delta,|H|)$.
\end{theorem}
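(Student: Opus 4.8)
The plan is to adapt the Kostochka--Stiebitz discharging argument for chromatic-critical graphs to the $(\mathcal{P},L)$-critical setting, using the low-vertex structure supplied by Theorem~\ref{theorem_main-result}. Write $F = H(V(H,\mathcal{P},L))$ for the low-vertex hypergraph; since $H$ is locally linear, $F$ is simple, and by Proposition~\ref{prop_pl-vertex-crit}(a) every low vertex $v$ has $d_H(v) = r|L(v)| = rk = \delta$, while every high vertex has $d_H(v) \geq \delta + 1$. The target bound $d(H) \geq \delta n + \tfrac{\delta-2}{\delta^2+2\delta-2} n$ says that the total excess of degrees over $\delta$ must be at least $\tfrac{\delta-2}{\delta^2+2\delta-2} n$. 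High vertices each contribute at least $1$ to this excess, so the difficulty is entirely concentrated on the low vertices, which contribute nothing directly; the whole point is to show there are enough high vertices, or enough ``spare'' degree among low vertices, to meet the bound.

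First I would set up a discharging scheme. Assign each vertex $v$ an initial charge $\mathrm{ch}(v) = d_H(v) - \delta \geq 0$, so that $\sum_v \mathrm{ch}(v) = d(H) - \delta n$ and the goal becomes $\sum_v \mathrm{ch}(v) \geq \tfrac{\delta-2}{\delta^2+2\delta-2} n$. The key structural input is Theorem~\ref{theorem_main-result}: each block $B$ of $F$ is either a brick ($tK_m$ or $tC_m$ with $m$ odd), or lies in $\mathcal{F}(\mathcal{P})$ and is $r$-regular, or lies in $\mathcal{P}$ with $\Delta(B) \leq r$. Because $F$ is simple and the low vertices have $F$-degree governed by $\delta = kr$, I would translate each possibility into a statement about how the $\delta$ edges at a low vertex split between $F$ and the rest of $H$, and in particular identify which low vertices must be adjacent (in $H$) to a high vertex or to a low vertex of higher $H$-degree. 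The classical Kostochka--Stiebitz strategy is to let each low vertex that needs help pull a fixed fraction of charge from nearby high vertices along the edges joining them, exploiting the fact that a component of low vertices cannot be ``too large and too regular'' without forcing $H = K_{\delta+1}$, which is excluded. The excluded-$K_{\delta+1}$ hypothesis is exactly what prevents the extremal tight configuration from collapsing the bound.

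The main obstacle will be the discharging analysis of the bricks $tK_m$ and the interface between a block of $F$ and the high-degree vertices of $H$. For complete-type blocks $tK_m$, a low vertex has $t(m-1)$ of its $\delta$ incident edges inside the block, leaving $\delta - t(m-1)$ edges going outside; I must show these ``outside'' edges reach vertices that can supply the needed charge, and bound the number of low vertices a single high vertex must feed. This is where the precise constant $\tfrac{\delta-2}{\delta^2+2\delta-2}$ enters: it is the worst-case ratio forced by a $K_{\delta+1}$-minus-an-edge type gadget, and verifying that every admissible block/interface configuration yields at least this much leftover charge is the delicate counting step. I would handle it by a case split over the three block types of Theorem~\ref{theorem_main-result}, treating the $r$-regular $\mathcal{F}(\mathcal{P})$-blocks and the $\Delta(B)\le r$ blocks in $\mathcal{P}$ (which leave comparatively many edges leaving $F$, hence are easy) separately from the tight brick case. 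Throughout, I would lean on Proposition~\ref{prop_pl-vertex-crit}(b), which pins down that at a low vertex the $\delta$ edges distribute as exactly $r$ into each of the $k$ color classes, giving the rigid local structure that makes the discharging inequalities come out with the stated constant.
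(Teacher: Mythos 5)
Your high-level plan coincides with the paper's strategy only in its first step: both use Theorem~\ref{theorem_main-result} to conclude that the low-vertex hypergraph $H(U)$, where $U=\{v\mid d_H(v)=\delta\}$, is a disjoint union of Gallai trees. From there the paper does not do local discharging at all. It introduces the quantity $\sigma(F)=|V(F)|\,r_\delta-d(F)$ with $r_\delta=\delta-1+\tfrac{2}{\delta}$, proves by induction on the number of blocks that every Gallai tree $T\neq K_{\delta+1}$ with $\Delta(T)\le\delta$ satisfies $\sigma(T)\ge 2$ (Lemmas~\ref{lemma:block-sigma} and~\ref{lemma_sigma}, with a separate case for the $\varepsilon_\delta$-hypergraphs, i.e.\ chains of $K_\delta$'s joined by single-edge blocks, where $\sigma$ equals exactly $2$), and then combines the resulting inequality $d(H(U))\le r_\delta|U|$ with the trivial bound $d(H)\ge(\delta+1)n-|U|$ in one global averaging step (Lemma~\ref{lemma_dgeqa}) to extract the constant $\tfrac{\delta-2}{\delta^2+2\delta-2}$. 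Your proposal leaves precisely this quantitative core --- the bound $\sigma(T)\ge 2$ and the block induction behind it --- as ``the delicate counting step,'' so the essential content of the proof is not actually carried out.

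Moreover, the specific mechanism you propose, letting each needy low vertex pull a fixed fraction of charge from nearby high vertices along the edges joining them, breaks down on exactly the extremal configurations. In an $\varepsilon_\delta$-component of $H(U)$ every separating vertex lies in one $K_\delta$ and one single-edge block, so all $\delta$ of its incident edges already belong to $H(U)$; there is no incidence left over along which a high vertex could send it charge, and such components have average degree approaching $r_\delta$ from below, which is why the slack is only $\sigma=2$ per component. Handling these vertices forces you either to move charge through the component or, as the paper does, to abandon per-vertex transfers in favour of a component-wise average-degree bound. Note also that Proposition~\ref{prop_pl-vertex-crit}(b) is not needed here (it is consumed inside the proof of Theorem~\ref{theorem_main-result}), and that high vertices enter the final computation only through the crude count $d_H(v)\ge\delta+1$, never through an adjacency analysis, so ``bounding the number of low vertices a single high vertex must feed'' is a detour the actual argument never has to take.
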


The remaining part of this section is dedicated to the proof of the above theorem. For $(\chi^\ell, \mathcal{P})$-critical hypergraphs, we can directly conclude the next corollary from Theorem~\ref{theorem_gallai-bound}.

\begin{corollary}
Let $\mathcal{P}$ be a smooth additive hypergraph property with $d(\mathcal{P})=r \geq 1$, let $k \geq 2$, and let $\delta=kr \geq 3$. Furthermore, let $H \neq K_{\delta + 1}$ be a locally linear hypergraph with respect to $(\chi^\ell,\mathcal{P})$, whereas $\chi^\ell(H)=k+1$. Then, it holds $d(H) \geq a(\delta,|H|)$.
\end{corollary}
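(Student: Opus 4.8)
The plan is to run a Gallai-style discharging argument driven by the block structure of the low-vertex hypergraph. First I would record the degree data. By Proposition~\ref{prop_pl-vertex-crit}(a) every vertex satisfies $d_H(v)\ge r|L(v)|=rk=\delta$, so the low vertices (those in $V(H,\mathcal P,L)$) have degree exactly $\delta$ and every other vertex has degree at least $\delta+1$. Since $\mathcal P$ is additive and $H$ is $(\mathcal P,L)$-critical, $H$ is connected: an uncolorable component would remain in $H-v$ for any $v$ outside it, contradicting criticality. Writing $n=|H|$ and $\epsilon=\frac{\delta-2}{\delta^2+2\delta-2}$, the inequality $d(H)\ge a(\delta,n)=(\delta+\epsilon)n$ is equivalent to $\sum_{v}(d_H(v)-\delta-\epsilon)\ge 0$. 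Only high vertices carry positive excess $d_H(v)-\delta\ge 1$, so the entire task is to show that this excess can pay the uniform $\epsilon$-deficit of every low vertex.

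Next I would extract the structure. Let $F=H(V(H,\mathcal P,L))$; by local linearity $F$ is simple, so Theorem~\ref{theorem_main-result} forces every block of $F$ to be a complete graph $K_n$, an odd cycle $C_n$, an $r$-regular member of $\mathcal F(\mathcal P)$, or a member of $\mathcal P$ with $\Delta\le r$. A low vertex has $F$-degree at most $\delta$, so a $K_n$-block has $n\le\delta+1$; and $n=\delta+1$ is impossible, since a vertex of $K_{\delta+1}$ would be saturated inside $F$ with no incidence to a high vertex, forcing $K_{\delta+1}$ to be a whole component and, by connectedness, $H=K_{\delta+1}$, which is excluded. Hence all complete blocks have order at most $\delta$, and relative to the degree budget $\delta$ the densest possible block is $K_\delta$; all other block types have strictly smaller interior degree and therefore more incidences leaving $F$.

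The discharging itself I would set up in two phases on the initial charge $ch(v)=d_H(v)$, preserving the total and aiming for every vertex to end with at least $\delta+\epsilon$. In phase one each high vertex $w$ keeps $\delta+\epsilon$ and distributes its surplus $d_H(w)-\delta-\epsilon$ equally across its incidences to low vertices; since $\frac{t-\epsilon}{\delta+t}$ increases in $t$, the worst donor has degree $\delta+1$ and delivers $\frac{1-\epsilon}{\delta+1}$ per incidence, and one checks $\frac{1-\epsilon}{\delta+1}>\epsilon$ (equivalently $\epsilon<\frac1{\delta+2}$). In phase two the charge is equalized inside each clique block so that its interior (cut) vertices, which have no external incidence, are covered by the surplus of its leaf vertices. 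The constant is pinned exactly by the block $K_\delta$ with two cut vertices and $\delta-2$ leaves, each leaf fed by one degree-$(\delta+1)$ high vertex: the block then receives $(\delta-2)\tfrac{1-\epsilon}{\delta+1}$, and demanding that this cover $\delta$ deficits of size $\epsilon$ rearranges to $(\delta-2)\ge\epsilon(\delta^2+2\delta-2)$, i.e. exactly $\epsilon\le\frac{\delta-2}{\delta^2+2\delta-2}$ with equality. The cycle, $r$-regular $\mathcal F(\mathcal P)$, and low-$\Delta$ $\mathcal P$ blocks are comfortably covered because their interiors are sparser.

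The main obstacle is the global balance across the block tree of each component of $F$. A clique block that is a branch point has more than two cut vertices, hence fewer than $\delta-2$ leaves, and cannot pay for itself locally; its deficit must be absorbed by the surplus of leaf-blocks (cliques with a single cut vertex and $\delta-1$ leaves) routed along the bridges of the Gallai forest. Making this routing rigorous — showing that summed over a whole Gallai tree the average number of cut vertices per clique block is at most two, so global surplus meets global deficit — is the crux, and is precisely where the recursive merging structure of hard pairs underlying Theorem~\ref{theorem_main-result} enters. A secondary complication, absent in the graph arguments of Gallai and of \cite{MiSkre01}, is that $F$ is obtained by \emph{shrinking}: a single hyperedge may meet several low and several high vertices simultaneously, so "edges to high vertices'' must be replaced by an incidence count and one must verify the budget identity $d_H(v)=d_F(v)+(\text{external incidences at }v)$ is not spoiled by edges that lie in $F$ yet also touch high vertices. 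Once this bookkeeping is in place the two-phase argument closes, with $K_{\delta+1}$ confirmed as the unique exception matching the hypothesis $H\neq K_{\delta+1}$.
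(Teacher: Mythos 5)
There is a genuine gap, and you name it yourself: the ``routing'' of surplus across the block tree of a Gallai-tree component is the entire content of the result, and your proposal leaves it as an acknowledged open step. The tight configurations are exactly the chains in which $K_\delta$-blocks alternate with single-edge blocks (the $\varepsilon_\delta$-hypergraphs of Lemma~\ref{lemma_sigma}), and a local per-block or per-incidence discharging rule of the kind you describe does not close on its own: a $K_\delta$ end-block of such a chain has $\delta-1$ non-cut vertices but those vertices need not each be fed by a private high neighbour of degree $\delta+1$ (a single hyperedge of $H$ can meet several low vertices of the block and only one high vertex, since $F$ is obtained by shrinking), so the per-leaf income $\frac{1-\epsilon}{\delta+1}$ you budget for is not guaranteed. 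The paper avoids all of this by not discharging at all: it defines $\sigma(F)=|V(F)|r_\delta-d(F)$ with $r_\delta=\delta-1+\frac{2}{\delta}$, shows in Lemma~\ref{lemma_dgeqa} by a short global count that $\sigma(H(U))\ge 0$ already implies $d(H)\ge a(\delta,|H|)$, and then proves $\sigma(T)\ge 2$ for every Gallai tree $T\ne K_{\delta+1}$ by induction on the number of blocks (Lemmas~\ref{lemma:block-sigma} and~\ref{lemma_sigma}), with the $\varepsilon_\delta$-hypergraphs isolated as the extremal case. That induction is precisely the rigorous substitute for the routing you postpone.

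Separately, you are proving far more than the statement asks. The corollary is, in the paper, a one-line consequence of Theorem~\ref{theorem_gallai-bound}: by definition, a hypergraph that is locally linear with respect to $(\chi^\ell,\mathcal{P})$ with $\chi^\ell(H:\mathcal{P})=k+1$ is locally linear with respect to $(\mathcal{P},L)$ for some list-assignment $L$ with $|L(v)|=k$ for all $v$ such that $H$ is $(\mathcal{P},L)$-critical, and Theorem~\ref{theorem_gallai-bound} applies verbatim to that pair $(H,L)$. If you intend to reprove the underlying theorem rather than cite it, you must supply the global balance argument; as written, the proposal is a plan with its central lemma missing, plus a secondary unresolved issue (which you also flag) in the bookkeeping of incidences between low and high vertices through shrunk hyperedges.
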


The proof of Theorem~\ref{theorem_gallai-bound} is mainly done via three lemmas. At first, we show that the bound always holds if a specific condition is fulfilled. Afterwards, we prove that this condition is always true. Most parts of the next three lemmas are similar to those in the paper of Kostochka and Stiebitz~\cite{KosStieb03}. To start with, we need some new notation. Since we only regard linear hypergraphs, the structures described in Theorem~\ref{theorem_main-result} can be simplified. Therefore, we say that a connected simple hypergraph $H$ is a \textbf{Gallai tree}, if each block $B$ of $H$ is a complete graph, or $B$ is a cycle of odd length, or $B \in \mathcal{F}(\mathcal{P})$ and $B$ is $r$-regular, or $B \in \mathcal{P}$ and $\Delta(B) \leq r$.

\begin{lemma} \label{lemma_dgeqa}
Let $\mathcal{P}$ be a smooth additive hypergraph property with $d(P)=r \geq 1$, let $k \geq 2$, and let $\delta=kr \geq 3$. Furthermore, let $H \neq K_{\delta + 1}$ be a locally linear hypergraph with respect to $(\mathcal{P},L)$, whereas $L$ is a list-assignment for $H$ with $|L(v)|=k$ for all $v \in V(H)$. Moreover, let $$U=\{v \in V(H)~|~d_H(v)=\delta \},$$ let $$r_\delta=\delta - 1 + \frac{2}{\delta},$$ and let $$\sigma=|U| r_\delta - d(H(U)).$$
 If $\sigma \geq 0$, then it holds $$d(H) \geq a(\delta,n).$$
\end{lemma}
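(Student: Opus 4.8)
The plan is to treat $\sigma \geq 0$ as given and reduce the bound to a short incidence count whose only non-routine ingredient is a numerical identity tied to the exact constant in $a(\delta,n)$. First I would fix notation. Since $H$ is locally linear with respect to $(\mathcal{P},L)$ it is in particular $(\mathcal{P},L)$-critical, so Proposition~\ref{prop_pl-vertex-crit}(a) gives $d_H(v) \geq r|L(v)| = rk = \delta$ for every $v$, with equality exactly on $U = V(H,\mathcal{P},L)$ (here I use that $|L(v)|=k$ is constant). Write $n=|H|$ and let $b = n - |U|$ be the number of high vertices, each of degree at least $\delta+1$. Putting $c = \frac{\delta-2}{\delta^2+2\delta-2}$, I note $a(\delta,n) = (\delta+c)n$, and since $d(H) = \delta|U| + S$ with $S := \sum_{v \notin U} d_H(v)$, the goal $d(H) \geq a(\delta,n)$ is equivalent to the excess inequality $S \geq \delta b + c(|U|+b)$.

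Next I would set up the counting. For each edge $e$ put $\ell(e) = |i_H(e) \cap U|$. Counting low-vertex incidences gives $\sum_{e}\ell(e) = \sum_{v \in U} d_H(v) = \delta|U|$, while by the definition of the shrinking operation the edges surviving in $H(U)$ are precisely those with $\ell(e)\geq 2$, so $d(H(U)) = \sum_{\ell(e)\geq 2}\ell(e)$. Hence the number $m_1$ of edges with $\ell(e)=1$ equals $\delta|U| - d(H(U))$. Feeding in $\sigma = |U|r_\delta - d(H(U)) \geq 0$ together with $\delta - r_\delta = 1 - \tfrac{2}{\delta} = \tfrac{\delta-2}{\delta}$ yields
$$m_1 = \delta|U| - d(H(U)) \geq \delta|U| - |U|r_\delta = |U|\,\frac{\delta-2}{\delta}.$$

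I would then bound $S$ in two independent ways. Since $S = \sum_{e}(|i_H(e)|-\ell(e))$, each high vertex having degree at least $\delta+1$ gives $S \geq (\delta+1)b$; and each of the $m_1$ edges with a single low vertex has at least one high vertex on it, so $S \geq \sum_{\ell(e)=1}(|i_H(e)|-1) \geq m_1 \geq |U|\frac{\delta-2}{\delta}$. For any $\lambda \in [0,1]$ the convex combination of two valid lower bounds is again a lower bound, so $S \geq \lambda(\delta+1)b + (1-\lambda)|U|\frac{\delta-2}{\delta}$. I would then pick $\lambda$ to force the $b$-coefficient to equal $\delta+c$ and the $|U|$-coefficient to equal $c$; these demand $\lambda = \frac{\delta+c}{\delta+1}$ and $\lambda = 1 - \frac{c\delta}{\delta-2}$ respectively, and a direct computation (using the factorization $\delta^3+2\delta^2-\delta-2 = (\delta-1)(\delta+1)(\delta+2)$) shows both equal $\frac{(\delta-1)(\delta+2)}{\delta^2+2\delta-2}$, which lies in $(0,1)$ for $\delta \geq 3$. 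With this $\lambda$ the combination reads $S \geq (\delta+c)b + c|U| = \delta b + c(|U|+b)$, which is exactly the excess inequality, so $d(H) \geq a(\delta,n)$.

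The elementary parts are the incidence identity $m_1 = \delta|U| - d(H(U))$ and the two degree bounds; the specific constant enters only at the last step, so the thing to get right is that the single free parameter $\lambda$ can meet both coefficient demands simultaneously. This is forced by the precise shapes of $a(\delta,n)$ and $r_\delta$: the two expressions for $\lambda$ coincide identically in $\delta$, which is exactly why Gallai's constant $\frac{\delta-2}{\delta^2+2\delta-2}$ is the right one. I expect no real obstacle beyond carrying out this cancellation carefully. It is worth noting that neither the hypothesis $H \neq K_{\delta+1}$ nor local linearity is used in this lemma; both enter only in the separate argument establishing $\sigma \geq 0$.
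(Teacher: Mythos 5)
Your proof is correct and is essentially the paper's own argument in a different normalization: the paper combines the two bounds $d(H)\geq(\delta+1-\tfrac{2}{\delta})|U|$ (which is your $d(H)\geq\delta|U|+m_1$ plus $\sigma\geq 0$) and $d(H)\geq(\delta+1)n-|U|$ with positive weights $1$ and $\delta+1-\tfrac{2}{\delta}$ to cancel $|U|$, which is exactly your convex combination in $\lambda$ after subtracting $\delta|U|$ from both sides. Your closing remark is also consistent with the paper: its proof invokes $H\neq K_{\delta+1}$ only to note $U\neq V(H)$, a fact its final computation (like yours) never actually uses.
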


\begin{proof} By Proposition~\ref{prop_pl-vertex-crit}(a), we have $\delta(H) \geq \delta$ and, thus, $U=V(H,\mathcal{P},L)$. Moreover, we claim $U \neq V(H)$. Otherwise, $H=H(U)$ would be a $\delta$-regular Gallai tree (by Theorem~\ref{theorem_main-result} and since $H$ is connected), and this is only possible if $H=K_{\delta + 1}$ (as $\delta > r$, $\delta \geq 3$). Hence, $U \neq V(H)$.

If $U = \varnothing$, we obtain $d(H) \geq (\delta + 1) n \geq a(\delta,n)$ and there is nothing left to prove. Thus, we may assume $U \neq \varnothing$.
%
Then, it holds
\begin{align*}
d(H) & = \delta |U| + \sum_{v \in V(H) \setminus U}d_H(v) \\
& \geq d(H-U) + 2\delta |U| - d(H(U))\\
&  = d(H-U) + \sigma + (2\delta - r_\delta)|U|\\
& = d(H-U) + \sigma + (\delta + 1 - \frac{2}{\delta})|U|\\
& \geq (\delta + 1 - \frac{2}{\delta})|U|
\end{align*}
On the other hand, $$d(H) \geq (\delta + 1)n  - |U|.$$
As a consequence, we obtain
\begin{align*}
d(H) + d(H)(\delta + 1 - \frac{2}{\delta}) & \geq (\delta + 1 - \frac{2}{\delta})|U| + (\delta + 1)(\delta + 1 - \frac{2}{\delta})n\\
& - |U|(\delta + 1 - \frac{
2}{\delta})\\
& = (\delta + 1)(\delta + 1 - \frac{2}{\delta})n
\end{align*}
By rearranging the inequation we easily get the required result.
\end{proof}

Thus, the only remaining question is if $\sigma \geq 0$ is always fulfilled. That this is indeed the case, is proven in the next two lemmas.

First of all, let $r_\delta=\delta - 1 + \frac{2}{\delta}$. Moreover, for an arbitrary hypergraph $F$, let $$\sigma(F)=|V(F)|r_\delta - d(F).$$ Regarding a locally linear hypergraph $H$ with respect to $(\mathcal{P},L)$, we know that each component of $H(V(H,P,L))$ forms a Gallai tree (by Theorem~\ref{theorem_main-result}). Thus, let $\mathfrak{T}_\delta$ denote the set of Gallai trees distinct from $K_{\delta+1}$ with maximum degree at most $\delta$. Lastly, for $T \in \mathfrak{T}_\delta$ and for an end-block $B$ of $T$, we define $T_B=T-(V(B)- \{x\})$, whereas $x$ denotes the only separating vertex of $T$ in $B$ (if $T$ has only one block choose an arbitrary vertex $x$ of $V(T)$).

\begin{lemma}\label{lemma:block-sigma}
Let $T \in \mathfrak{T}_\delta$ and let $\delta \geq 3$. Then, the following statements hold:
\begin{itemize}
\item[\upshape (a)] If $B \in \mathcal{B}(T)$, then $\sigma(B)=2$ if $B=K_\delta$ and $\sigma(B) \geq r_\delta$ otherwise.
\item[\upshape (b)] If $B$ is an end-block of $T$, then $\sigma(T) = \sigma(T_B) + \sigma(B) - r_\delta$.
\end{itemize}
\end{lemma}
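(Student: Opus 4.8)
The plan is to prove both statements by direct computation from the definition $\sigma(F) = |V(F)| r_\delta - d(F)$, using the explicit block structure supplied by the definition of a Gallai tree. For part (a) I would split into the four possible block types. If $B = K_m$ is a complete graph on $m$ vertices, then $d(B) = m(m-1)$ and so $\sigma(B) = m r_\delta - m(m-1) = m(r_\delta - (m-1))$. Since $B$ is a block of a tree in $\mathfrak{T}_\delta$ and hence has maximum degree at most $\delta$, we have $m - 1 \leq \delta$, i.e. $m \leq \delta + 1$; but $B = K_{\delta+1}$ is excluded (a block of $T \neq K_{\delta+1}$ with max degree $\le\delta$ cannot be $K_{\delta+1}$ since that already has degree $\delta$ at every vertex, forcing $T = K_{\delta+1}$), so $m \leq \delta$. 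Substituting $r_\delta = \delta - 1 + \tfrac{2}{\delta}$, the case $m = \delta$ gives $\sigma(K_\delta) = \delta(\tfrac{2}{\delta}) = 2$ exactly, which is the claimed value. For $m \leq \delta - 1$ I would check that the function $m \mapsto m(r_\delta - (m-1))$ is large enough; the worst small case (e.g. $m = 2$, an ordinary edge) gives $\sigma = 2(r_\delta - 1) = 2\delta - 4 + \tfrac{4}{\delta} \geq r_\delta$ for $\delta \geq 3$, and I would confirm the inequality $\sigma(K_m) \geq r_\delta$ holds across all $2 \leq m \leq \delta - 1$.

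For the remaining block types I proceed similarly. If $B = C_n$ is an odd cycle, then $B$ is $2$-regular on $n$ vertices, so $d(B) = 2n$ and $\sigma(B) = n(r_\delta - 2) = n(\delta - 3 + \tfrac{2}{\delta})$; for $\delta \geq 3$ and $n \geq 3$ this is at least $3(\tfrac{2}{\delta}) > 0$, and I would verify $\geq r_\delta$. If $B \in \mathcal{F}(\mathcal{P})$ is $r$-regular, then $d(B) = r|V(B)|$ and $\sigma(B) = |V(B)|(r_\delta - r)$; since $r \leq \delta$ and in fact $r < \delta$ (as $\delta = kr$ with $k \geq 2$), we have $r_\delta - r \geq r_\delta - (\delta - 1) = \tfrac{2}{\delta} > 0$, and because $|V(B)| \geq \delta + 1 > r_\delta/(r_\delta - r)$ one obtains $\sigma(B) \geq r_\delta$ after a short estimate. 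Finally if $B \in \mathcal{P}$ with $\Delta(B) \leq r$, then $d(B) \leq r|V(B)|$ and the same bound $\sigma(B) \geq |V(B)|(r_\delta - r) \geq r_\delta$ applies. In each non-complete case the degree sum is at most $r|V(B)|$ or $2|V(B)|$, both comfortably below $r_\delta |V(B)|$, giving room for the bound.

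For part (b) the key observation is that $T_B = T - (V(B) - \{x\})$ removes from $T$ exactly the vertices of $B$ other than the separating vertex $x$, and that since $T$ is a Gallai tree these removed vertices belong to no other block. Writing $V(T) = V(T_B) \cup (V(B) - \{x\})$ as a disjoint union, the vertex count splits as $|V(T)| = |V(T_B)| + (|V(B)| - 1)$, so the $r_\delta$-term contributes $r_\delta|V(T_B)| + r_\delta(|V(B)|-1)$. For the degree term I would argue that every edge of $T$ lies in a unique block, hence $E(T)$ partitions into $E(T_B)$ and $E(B)$ with no shared edge; the only subtlety is the shared vertex $x$, whose degree in $T$ equals its degree in $T_B$ plus its degree in $B$, while all other vertices keep their degrees in whichever piece contains them. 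This yields $d(T) = d(T_B) + d(B)$. Combining, $\sigma(T) = r_\delta|V(T)| - d(T) = \big(r_\delta|V(T_B)| - d(T_B)\big) + \big(r_\delta|V(B)| - d(B)\big) - r_\delta = \sigma(T_B) + \sigma(B) - r_\delta$, as required. I expect the main obstacle to be the case analysis in part~(a): pinning down the precise constant $2$ for $B = K_\delta$ while simultaneously establishing the uniform lower bound $r_\delta$ for all other blocks requires care with the choice $r_\delta = \delta - 1 + \tfrac{2}{\delta}$, and verifying the complete-graph inequality $\sigma(K_m) \geq r_\delta$ for small $m$ and the regular/bounded-degree estimates each demand a separate (though routine) check that the block's vertex count is large enough.
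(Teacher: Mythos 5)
Your strategy is the same as the paper's: compute $\sigma$ case by case over the block types for (a), and use additivity of $|V(\cdot)|$ and $d(\cdot)$ over the two pieces sharing the single vertex $x$ for (b). Your part (b) and your $K_m$ analysis are fine. But two steps in part (a) would fail as written. The more serious one: for blocks with $B\in\mathcal{F}(\mathcal{P})$ and $B$ $r$-regular (and likewise $B\in\mathcal{P}$ with $\Delta(B)\le r$) you justify $\sigma(B)\ge|V(B)|(r_\delta-r)\ge r_\delta$ by claiming $|V(B)|\ge\delta+1$. That claim is false: membership in $\mathcal{F}(\mathcal{P})$ only forces $\delta(B)\ge d(\mathcal{P})=r$, not $\ge\delta=kr$. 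For instance, with $\mathcal{P}=\mathcal{D}_{r-1}$ every connected $r$-regular hypergraph lies in $\mathcal{F}(\mathcal{P})$, and for $r=3$ there is such a simple hypergraph on only $3$ vertices (the three ordinary edges plus the $3$-element hyperedge), whereas $\delta+1=3k+1\ge 7$. A lower bound on $|V(B)|$ really is needed here, since $|V(B)|=2$ together with $d(B)=r|V(B)|$ would give only $2(r_\delta-r)$, which equals $3<3.5=r_\delta$ when $k=r=2$. The repair is the paper's: in a simple hypergraph a two-vertex block is $K_2$, already covered by the complete-graph case, so the remaining blocks satisfy $|V(B)|\ge 3$, and one then verifies $3(r_\delta-r)\ge r_\delta$, i.e.\ $2r_\delta\ge 3r$, using $\delta=kr$ with $k\ge 2$ (the paper splits this check into $k\ge 3$ and $k=2$).

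Second, in the odd-cycle case you propose to verify $n(r_\delta-2)\ge r_\delta$ for all odd $n\ge 3$. This fails at $n=\delta=3$: there $\sigma(C_3)=3\cdot\frac{2}{3}=2<\frac{8}{3}=r_\delta$. It is not a counterexample to the lemma, because $C_3=K_3=K_\delta$ belongs to the first alternative, but it means you must route $C_3$ through the complete-graph case and restrict the genuine cycle case to $n\ge 5$, where $5(r_\delta-2)\ge r_\delta$ does hold for all $\delta\ge 3$. With these two corrections, and carrying out the endpoint check for $K_m$ with $1\le m\le\delta-1$ (which your concavity plan handles), the proof goes through and coincides with the paper's.
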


\begin{proof}
If $B$ is a $K_b$ for some $b \in \{1,2,\ldots,\delta\}$, then $$\sigma(B)=b(r_\delta - b + 1)
\begin{cases}
\geq r_\delta, & \text{if } 1 \leq b \leq \delta-1, and\\
= 2, & \text{if } b=\delta.
\end{cases}$$
Otherwise, if $B$ is a cycle of odd length with at least $5$ vertices, then it is easy to check that $$\sigma(B) = |V(B)|(r_\delta - 2) \geq 5(r_\delta - 2) \geq r_\delta.$$  If $B=(e,\{e\})$ for some edge $e$, then $\sigma(B)=|e|(r_\delta - 1) \geq r_\delta$ (as $r_\delta \geq 2$).

It remains to consider the case that $B$ is a block with $\Delta(B) \leq r$ that is not of the above mentioned types. This implies, in particular, that $|V(B)| \geq 3$. If $k \geq 3$, then $rk \geq 2r + 1$ and we conclude
\begin{align*}
\sigma(B) & = |V(B)|(rk - 1 + \frac{2}{rk}) - \sum_{v \in V(B)}d_B(v)\\
& \geq |V(B)|(rk-1 + \frac{2}{rk}) - |V(B)| r\\
& = |V(B)|(r(k-1) - 1 + \frac{2}{rk}) \\
& \geq 2rk - 2r - 2 + \frac{4}{rk} \\
& = r_\delta + rk - 2r - 1 + \frac{2}{rk} \geq r_\delta.
\end{align*}
Otherwise, $k=2$ and, since $\delta \geq 3$, we have $r \geq 2$. Then, since $|V(B)| \geq 3$, we get
\begin{align*}
\sigma(B) & \geq |V(B)|(r(k-1) - 1 + \frac{2}{rk} )\\
& \geq 3rk - 3r - 3 + \frac{6}{rk} \\
& = r_\delta + 2rk - 3r - 2 + \frac{4}{rk} \geq r_\delta,
\end{align*}
as $2rk = 4r \geq 3r + 2$.
Due to the fact that $T_B$ and $B$ share exactly one vertex, statement (b) is evident.
\end{proof}

Following Gallai, we say that a hypergraph is an $\varepsilon_\delta$-hypergraph if each separating vertex belongs to exactly two blocks, one being a $K_\delta$ and the other one being of the form $(e,\{e\})$ for some edge $e$, and if each non-separating vertex is contained in a block, which is a $K_\delta$.

\begin{lemma}\label{lemma_sigma}
Let $T \in \mathfrak{T}_\delta$ and let $\delta \geq 4$. Then, $\sigma(T) \geq 2$ if $T$ is an $\varepsilon_\delta$-hypergraph and $\sigma(T) \geq r_\delta$, otherwise.
\end{lemma}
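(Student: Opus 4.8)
The plan is to establish both bounds at once by induction on the number of blocks $|\mathcal{B}(T)|$: I prove that $\sigma(T) \geq 2$ holds for every $T \in \mathfrak{T}_\delta$, and that the stronger bound $\sigma(T) \geq r_\delta$ holds whenever $T$ is not an $\varepsilon_\delta$-hypergraph. The engine is the recursion of Lemma~\ref{lemma:block-sigma}(b), namely $\sigma(T) = \sigma(T_B) + \sigma(B) - r_\delta$ for an end-block $B$, combined with the block values of Lemma~\ref{lemma:block-sigma}(a), $\sigma(K_\delta) = 2$ and $\sigma(B) \geq r_\delta$ for every other block. The decisive structural input is the degree restriction $\Delta(T) \leq \delta$: a vertex lying in a $K_\delta$ already has degree $\delta - 1$, so it has at most one further incidence, and hence a cut vertex of a $K_\delta$-block lies in exactly one other block, which is forced to be a single-edge block.

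For the base case $|\mathcal{B}(T)| = 1$ the hypergraph is a single block $B = T$: if $B = K_\delta$ then $T$ is an $\varepsilon_\delta$-hypergraph with $\sigma(T) = 2$, and if $B \neq K_\delta$ then $T$ is not an $\varepsilon_\delta$-hypergraph (its non-separating vertices lie in no $K_\delta$) and $\sigma(T) = \sigma(B) \geq r_\delta$. For the inductive step I fix an end-block $B$ with cut vertex $x$ and set $T_B = T - (V(B) \setminus \{x\})$. If $B \neq K_\delta$, then $T$ is not an $\varepsilon_\delta$-hypergraph (an end-block of such a hypergraph must be a $K_\delta$, since its pendant vertices are non-separating and so must sit in a $K_\delta$), and I split on whether $T_B$ is an $\varepsilon_\delta$-hypergraph: if it is not, then $\sigma(T_B) \geq r_\delta$ and $\sigma(B) \geq r_\delta$ give $\sigma(T) \geq r_\delta$; if it is, the degree bound forces $d_B(x) \leq 1$, so $B$ is a single-edge block with $\sigma(B) \geq 2(r_\delta - 1)$, and then $\sigma(T) \geq \sigma(T_B) + 2(r_\delta - 1) - r_\delta \geq 2 + (r_\delta - 2) = r_\delta$.

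The case $B = K_\delta$ is the main obstacle. Here $\sigma(B) = 2$, and the degree bound shows the cut vertex $x$ has degree exactly $1$ in $T_B$, lying in a single-edge block $e'$ of $T_B$; in particular $T_B$ can never be an $\varepsilon_\delta$-hypergraph (every vertex of such a hypergraph has degree at least $\delta - 1$), so the induction gives $\sigma(T_B) \geq r_\delta$ and hence $\sigma(T) = \sigma(T_B) + 2 - r_\delta \geq 2$, settling the first bound. To reach $\sigma(T) \geq r_\delta$ when $T$ is not an $\varepsilon_\delta$-hypergraph, I pass to the smaller hypergraph $\tilde T = T_B - x$, obtained by deleting the pendant $K_\delta$ together with its attaching vertex, so that $e'$ either disappears (if it is an ordinary edge) or shrinks to $e' - x$ (if it is a hyperedge). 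A short computation with Lemma~\ref{lemma:block-sigma}(b) and the degrees gives $\sigma(T) = \sigma(\tilde T)$ in the ordinary case and $\sigma(T) = \sigma(\tilde T) + 1$ in the hyperedge case, so $\sigma(T) \geq \sigma(\tilde T)$ in either case.

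The crucial step is then to show that $\tilde T$ inherits the failure of the $\varepsilon_\delta$-property. I will argue the contrapositive: if $\tilde T$ were an $\varepsilon_\delta$-hypergraph, then re-attaching the deleted $K_\delta$ at $x$ would again produce an $\varepsilon_\delta$-hypergraph — here one uses $\Delta(T) \leq \delta$ to see that the re-attachment vertex had degree at most $\delta - 1$ in $\tilde T$, hence was non-separating and lay in a $K_\delta$, so that it becomes a cut vertex in a $K_\delta$ together with one single edge — contradicting that $T$ is not an $\varepsilon_\delta$-hypergraph. Thus $\tilde T$ is not an $\varepsilon_\delta$-hypergraph, the inductive bound yields $\sigma(\tilde T) \geq r_\delta$, and $\sigma(T) \geq r_\delta$ follows, closing the induction. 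The hard part is precisely this last step: the delicate bookkeeping that separates ordinary from hyperedge bridges, together with the verification that the reduction keeps the object inside $\mathfrak{T}_\delta$ (in particular that $\tilde T \neq K_{\delta + 1}$, which holds because no block of $T$ is as large as $K_{\delta + 1}$), is where essentially all of the care is concentrated.
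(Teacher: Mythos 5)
Your overall architecture --- induction on the number of blocks, driven by Lemma~\ref{lemma:block-sigma} --- is the same as the paper's, but your case split is genuinely different: you case on whether a chosen end-block is a $K_\delta$, whereas the paper cases on whether $T$ contains a single-edge block $(e,\{e\})$ at all and, if so, deletes that edge and applies the induction hypothesis to the components of $T-\{e\}$. The difference is not cosmetic, because your route leans on an inference that is valid for graphs but false for hypergraphs: \emph{a block in which some vertex has degree $1$ must be a single-edge block}. You use this twice --- once in the sub-case ``$B\neq K_\delta$ and $T_B$ is an $\varepsilon_\delta$-hypergraph'' to get $\sigma(B)\geq 2(r_\delta-1)$, and once, more seriously, in the case $B=K_\delta$, where the identification of $x$'s unique remaining block as a single edge $e'$ is the premise for the entire reduction to $\tilde T$.

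Here is a concrete failure. Take $\mathcal{P}=\mathcal{D}_1$, so $r=2$, and $k=2$, $\delta=4$. Let $B'$ be the hypergraph on $\{x,a,b\}$ with a hyperedge on $\{x,a,b\}$ and an ordinary edge $ab$; one checks that $B'$ is a block (no separating vertex), that $B'\in\mathcal{D}_1$ with $\Delta(B')=2=r$, and that $d_{B'}(x)=1$ even though $B'$ is not a single-edge block. Gluing a $K_4$ to $B'$ at $x$ gives a hypergraph $T\in\mathfrak{T}_4$ that is not an $\varepsilon_4$-hypergraph and has $\sigma(T)=6r_4-17=4$. Your $B=K_\delta$ branch applied to the end-block $K_4$ would declare $x$'s other block a single edge, pass to $\tilde T=T_B\div x$ (which here is two parallel edges on $\{a,b\}$, hence not simple and not in $\mathfrak{T}_4$, with $\sigma(\tilde T)=3<r_4$), and conclude $\sigma(T)\geq r_4+1=4.5$ --- which is false. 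So that branch needs a genuinely new idea, e.g.\ the paper's device of cutting along an actual single-edge block when one exists, or a direct estimate $\sigma(B)\geq 2r_\delta-2$ for every block with $\Delta(B)\leq r$ containing a vertex of degree at most $1$ (which does hold and would rescue your other sub-case). Two further, smaller points: you write $T_B-x$ but your described behaviour (the hyperedge ``shrinks to $e'-x$'') is the shrinking operation $T_B\div x$; with the paper's definition of vertex deletion the hyperedge is removed entirely and your $\sigma$-bookkeeping goes the wrong way. And even with $\div$, a shrunken hyperedge need not remain a legitimate Gallai-tree block, since membership in $\mathcal{P}$ is preserved under induced subhypergraphs but not under shrinking, so $\tilde T\in\mathfrak{T}_\delta$ would require a separate argument.
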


\begin{proof}
The proof is by induction on the number $m$ of blocks of $T$. If $m=1$, the statement follows immediately from Lemma~\ref{lemma:block-sigma}. Assume $m \geq 2$. If $T$ is an $\varepsilon_\delta$-hypergraph, then $T_B$ is not an $\varepsilon_k$-hypergraph for any end-block $B$ of $T$ and, by Lemma~\ref{lemma_brooks} we have $\sigma(T) \geq \sigma(T_B) + \sigma(B) - r_\delta \geq 2$ (as $\sigma(T_B) \geq r_\delta$ by the induction hypothesis).

If $T$ is not an $\varepsilon_\delta$-hypergraph, assume that $T$ has a block $B$ of the form $B=(e,\{e\})$. Then, clearly $e$ is a bridge of $T$. For $x \in e$, let $T_x$ denote the component of $T - \{e\}$ containing $x$. As $T$ is not an $\varepsilon_\delta$-hypergraph, $T_x$ is not an $\varepsilon_\delta$-hypergraph for at least one $x \in e$. Moreover, $r_\delta \geq \delta - 2 \geq 2$. By applying the induction hypothesis, we conclude
$$\sigma(T) = \sum_{x \in e} \sigma(T_x) - |e| \geq 2(|e|-1) + r_\delta - |e| \geq r_\delta.$$
If $T$ has no block of the form $(e,\{e\})$, then no block of $T$ is a $K_\delta$. Let $B$ be an end-block of $T$. Then, $T_B$ is not a $\varepsilon_\delta$-hypergraph and, by the induction hypothesis and Lemma~\ref{lemma:block-sigma}, $\sigma(T) = \sigma(T_B) + \sigma(B) - r_k \geq r_k$.
\end{proof}

Now we can finally prove Theorem~\ref{theorem_gallai-bound}.

\begin{proofof}[Theorem~\ref{theorem_gallai-bound}]
Let $\mathcal{P},r,k,\delta$ be defined as
in Theorem~\ref{theorem_gallai-bound} and let $H\neq K_{\delta + 1}$ be a locally linear hypergraph with respect to $(\mathcal{P},L)$, whereas $L$ is a list-assignment for $H$ satisfying $|L(v)|=k$ for all $v \in V(H)$. By Proposition~\ref{prop_pl-vertex-crit}, $H$ has minimum degree at least $\delta$. As before, let $U=\{v \in V(H) ~|~ d_H(v)=\delta\}$. Then, each component of $H(U)$ is a Gallai tree (by Theorem~\ref{theorem_main-result}) and, since $H \neq K_{\delta + 1}$, each component of $H(U)$ belongs to $\mathcal{T}_\delta$. Thus, for each component $C$ of $H(U)$ it holds $\sigma(C) \geq 2$ by Lemma~\ref{lemma_sigma}. As a consequence, $\sigma(H(U)) \geq 0$ and, by Lemma~\ref{lemma_dgeqa}, we conclude $d(H) \geq a(\delta,|V(H)|)$.
\end{proofof}

\end{document}